\numberwithin{equation}{section}
\newtheorem{theorem}{Theorem}[section]
\newtheorem{lemma}[theorem]{Lemma}
\newtheorem{remark}[theorem]{Remark}\newtheorem{proposition}[theorem]{Proposition}\newtheorem{definition}[theorem]{Definition}
\newtheorem*{theorem*}{Theorem}
\renewcommand{\L}{\Lambda}
\renewcommand{\l}{\lambda}
\newcommand{\R}{{\mathbb R}}  \newcommand{\Z}{{\mathbb Z}} \newcommand{\N}{{\mathbb N}}
\newcommand{\C}{{\mathbb C}}   
\newcommand{\vol}{{\rm vol}}
\providecommand{\norm}[1]{\lVert#1\rVert}
\title[On the frame property of Hermite functions]{On the frame property of Hermite functions and exploration of their frame sets}
\author[M.~Faulhuber]{Markus Faulhuber$^{*}$}
\address{$^{*}$Faculty of Mathematics, University of Vienna \newline Oskar-Morgenstern-Platz 1, 1090 Vienna, Austria
\newline $^{\dagger}$ Department of Mathematical Sciences, Norwegian University of Science and Technology (NTNU), 7491 Trondheim, Norway
}
\email{markus.faulhuber@univie.ac.at}
\author[I.~Shafkulovska]{Irina Shafkulovska$^{*}$}
\email{irina.shafkulovska@univie.ac.at}
\author[I.~Zlotnikov]{Ilya Zlotnikov$^{*, \dagger}$}
\email{ilia.k.zlotnikov@ntnu.no}
\thanks{This research was funded in whole or in part by the Austrian Science Fund (FWF) [\href{https://doi.org/10.55776/P33217}{10.55776/P33217}].  I.~Z. also gratefully acknowledges support from the Research Council of Norway by Grant 334466. I.~Z.\ is grateful to I.~Krasikov for fruitful discussions and for providing the full text of \cite{MR2168916}. The authors gratefully acknowledge valuable feedback from the anonymous referees.}
\keywords{Gabor frame, Hermite function, Laguerre polynomial}
\subjclass{42C15; 33C45}
\begin{document}
\begin{abstract}
    We study Gabor frames with Hermite window functions. Gröchenig and Lyubarskii provided a sufficient density condition for their frame sets, which leads to what we call the ``safety region". For rectangular lattices and Hermite windows of order 4 and higher, we enlarge this safety region by providing new points on the boundary of this region. For this purpose, we employ the Janssen representation of the frame operator to compare its distance to the identity in the operator norm. The calculations lead to estimates on series involving Laguerre polynomials with Gaussian weight functions.
\end{abstract}

\vspace*{-1.8cm}
\maketitle
\vspace*{-.6cm}

\medskip
\medskip
\hrule
\smallskip
\begin{center}
	Dedicated to Karlheinz Gröchenig on the occasion of his 65th birthday.
\end{center}
\smallskip
\hrule
\smallskip
    
\section{Introduction and notation}
\subsection{Introduction}

The study of frame sets has seen breakthroughs and surprises in recent years. For Hermite functions, there was hope that the structure of their frame sets may be simple and can be fully understood soon. However, \textit{The Mystery of Gabor frames}, after the eponymous article \cite{Gro14} by Gröchenig, is an appropriate description of our understanding of Gabor systems with Hermite functions. Our results contribute to this puzzle.

In this work, we are concerned with frame sets of Hermite functions. This means that we seek to find rectangular lattices $\L \subset \R^2$ such that the resulting Gabor system is a frame. The Hermite function $h_n$ of order $n$, scaled to unit norm, is (cf.~\cite[Chap.~1.7]{Fol89}, \cite{HorLemVid23})
\begin{equation}
    h_n(t) =  c_n (-1)^n e^{\pi t^2} \frac{d^n}{dt^n}e^{-2\pi t^2}, \, t \in \R,
    \qquad \text{so } \qquad
    \norm{h_n}_{L^2(\R)} = 1, \, n \in \N_0,
\end{equation}
where $c_n = 2^{1/4}/\sqrt{n! (2\pi)^n 2^n}$. In particular, the $n$-th Hermite function is a Gaussian function multiplied with the $n$-th Hermite polynomial and $n=0$ gives the Gaussian function
\begin{equation}
    h_0(t) = 2^{1/4} e^{-\pi t^2}.
\end{equation}
Gabor systems with Hermite window functions were studied, e.g., by Gröchenig and Lyubarskii \cite{GroLyu07}, \cite{GroLyu09}. We recall one of the main results from \cite{GroLyu07}.
\begin{theorem*}[Gröchenig, Lyubarskii]
    Consider the Gabor system $\mathcal{G}(h_n,\L)$. If the density of the lattice $\L$ exceeds $n+1$, then $\mathcal{G}(h_n,\L)$ is a Gabor frame for $L^2(\R)$.
\end{theorem*}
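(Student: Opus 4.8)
The plan is to transfer the frame question to a sampling problem on the Bargmann–Fock side, where the Hermite order $n$ manifests itself as poly-analyticity of order $n+1$. First I would recall that the short-time Fourier transform $V_{h_n}f(x,\omega) = \int_\R f(t)\,\overline{h_n(t-x)}\,e^{-2\pi i \omega t}\,dt$ is, up to the standard Gaussian gauge factor, a \emph{true poly-analytic function} of order $n+1$: writing $z = x + i\omega$, the map $f \mapsto e^{\pi|z|^2/2}\,V_{h_n}f(x,-\omega)\,e^{\pi i x\omega}$ is a unitary isomorphism from $L^2(\R)$ onto the true poly-analytic Fock space $\mathbf{F}_n$ of order $n$ (with $\mathbf{F}_0$ the ordinary analytic Fock space). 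Under this identification the frame property of $\mathcal{G}(h_n,\L)$ is exactly equivalent to $\L$ being a \emph{set of sampling} for $\mathbf{F}_n$, i.e.\ to the two-sided inequality $A\norm{F}^2 \le \sum_{\l\in\L}|F(\l)|^2 e^{-\pi|\l|^2} \le B\norm{F}^2$ holding for all $F \in \mathbf{F}_n$.

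Second, I would embed $\mathbf{F}_n$ into the full poly-analytic Fock space $\mathbf{F}^{(n+1)} = \{F : \bar\partial^{\,n+1}F = 0\}\cap L^2(\C, e^{-\pi|z|^2})$, which splits orthogonally as $\mathbf{F}^{(n+1)} = \bigoplus_{k=0}^{n}\mathbf{F}_k$. The point of this decomposition is monotonicity: any sampling set for the larger space $\mathbf{F}^{(n+1)}$ is automatically a sampling set for each subspace $\mathbf{F}_k$, and in particular for $\mathbf{F}_n$, since both frame inequalities descend to subspaces. Hence it suffices to prove that $\L$ samples $\mathbf{F}^{(n+1)}$ whenever its density exceeds $n+1$. (This route yields a sufficient, not necessary, condition for $\mathbf{F}_n$, which is consistent with the bound $n+1$ being improvable, as the rest of the paper shows.)

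Third — and this is the crux — I would establish the poly-analytic sampling theorem: $D(\L) > n+1 \Rightarrow \L$ is sampling for $\mathbf{F}^{(n+1)}$. The idea is to exploit that a poly-analytic function of order $n+1$ is determined by $n+1$ entire components, $F(z) = \sum_{k=0}^{n}\bar z^{\,k}\,\varphi_k(z)$, so that $\mathbf{F}^{(n+1)}$ behaves like $n+1$ superimposed copies of $\mathbf{F}_0$. Since $\mathbf{F}_0$ has critical (Nyquist) sampling density $1$ by the Seip–Wallstén theorem, one expects the critical density of $\mathbf{F}^{(n+1)}$ to be $n+1$, matching the statement. To make this rigorous I would reduce the sampling inequality for $F$ on $\L$ to a \emph{simultaneous} jet/sampling condition for the analytic components $\varphi_0,\dots,\varphi_n$, and then invoke the density theorem for $\mathbf{F}_0$ together with an oversampling argument that consumes the surplus density $D(\L)-(n+1)>0$.

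The main obstacle is precisely this last step: at a common sample point the components $\varphi_0,\dots,\varphi_n$ are coupled through the single value $F(\l)$, so one cannot simply decouple into $n+1$ independent scalar sampling problems. I would handle this by a Beurling-type argument — passing to weak limits of translates of $\L$ and using the surplus density to guarantee that the reproducing-kernel Gram matrices built from the sampled jets remain uniformly invertible — or, equivalently, by a unitary reduction of $\mathbf{F}^{(n+1)}$ to a vector-valued analytic Fock space $\mathbf{F}_0\otimes\C^{n+1}$ followed by a perturbation estimate that compares the poly-analytic frame operator with the identity once the density margin is fixed. Securing the \emph{uniformity} of these invertibility bounds over all of $\mathbf{F}^{(n+1)}$ is where the real work lies; the reductions in the first two steps are formal consequences of the Bargmann-transform dictionary and the orthogonal decomposition.
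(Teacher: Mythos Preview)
The paper does not contain a proof of this theorem. It is quoted as a background result from \cite{GroLyu07} (``We recall one of the main results from \cite{GroLyu07}''), and the paper's own contributions begin only afterwards, with Theorem~\ref{thm:main}. So there is no ``paper's own proof'' against which to compare your proposal.

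That said, your outline is exactly in the spirit of the original Gr\"ochenig--Lyubarskii argument: the Bargmann-type transform sending $f\mapsto V_{h_n}f$ into the true poly-analytic Fock space $\mathbf{F}_n$, the embedding into the full poly-analytic space $\mathbf{F}^{(n+1)}=\bigoplus_{k=0}^n\mathbf{F}_k$, and the reduction of the Gabor frame property to a sampling problem are all correct and are precisely the structural steps in \cite{GroLyu07,GroLyu09}. Your first two steps are sound.

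The gap is in your third step, and you yourself flag it. The heuristic ``$\mathbf{F}^{(n+1)}$ behaves like $n{+}1$ copies of $\mathbf{F}_0$'' is the right intuition, but the two mechanisms you propose for making it rigorous --- a Beurling weak-limit argument on Gram matrices, or a perturbation of the frame operator on $\mathbf{F}_0\otimes\C^{n+1}$ --- are not how the actual proof goes, and it is not clear that either can be made to yield the sharp threshold $n{+}1$ rather than some unspecified larger density. In \cite{GroLyu07} the lattice case is handled by an explicit construction: one builds, from the Weierstrass $\sigma$-function attached to $\Lambda$, a dual window in the Gabor sense (equivalently, an explicit interpolating basis for $\mathbf{F}^{(n+1)}$ on $\Lambda$), and the density condition $\mathrm{vol}(\Lambda)<1/(n{+}1)$ is exactly what makes the resulting Gaussian-times-$\sigma$-product belong to the right weighted space. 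Your sketch does not reach this construction, and without it the ``uniform invertibility'' you need remains an assertion rather than a proof.
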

\noindent
Existing results, which we will explain shortly, show in particular the following:
\begin{center}
    the Gabor system $\mathcal{G}_n = \mathcal{G}(h_n, (1/\sqrt{n+1}) \Z^2)$ is \textit{not} a frame for $n=0,1,2,3$.
\end{center}
It seems natural to expect that this pattern continues, but, again, the frame set of Hermite functions keeps being mysterious. This is underlined by our upcoming main result.

\begin{theorem}[{\bf Main result}]\label{thm:main}
    Denote by $\mathcal{G}_n = \mathcal{G}(h_n, (1/\sqrt{n+1}) \Z^2)$ the Gabor system for the $n$-th Hermite function and square lattice of density $n+1$. Then $\mathcal{G}_n$ is a Gabor frame for $L^2(\R)$ if and only if $n \geq 4$.
\end{theorem}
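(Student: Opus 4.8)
The plan is to prove the two directions separately. The \emph{only if} direction---that $\mathcal{G}_n$ fails to be a frame for $n \in \{0,1,2,3\}$---is already contained in the existing results alluded to above, so I would simply collect and cite these. All the difficulty lies in the \emph{if} direction: showing that $\mathcal{G}_n$ is a frame for every $n \geq 4$, which I would do by producing a quantitative sufficient condition and verifying that it holds throughout the range $n \geq 4$.

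The main tool is the Janssen representation of the frame operator $S_n$ of $\mathcal{G}(h_n,(1/\sqrt{n+1})\Z^2)$. On the adjoint lattice $\sqrt{n+1}\,\Z^2$ (where $\alpha\beta = 1/(n+1)$) it reads
\[
S_n = (n+1)\sum_{(j,k)\in\Z^2} \langle h_n, \pi(j\sqrt{n+1},k\sqrt{n+1})\, h_n\rangle\, \pi(j\sqrt{n+1},k\sqrt{n+1}),
\]
whose $(0,0)$-term equals $(n+1)\,\mathrm{Id}$ because $\norm{h_n}=1$. Writing $S_n = (n+1)\,\mathrm{Id} + R_n$, with $R_n$ collecting the off-diagonal terms, $S_n$ is automatically bounded, and it is bounded below by $(n+1)-\norm{R_n}$; hence the system is a frame as soon as $\norm{R_n} < n+1$ in operator norm. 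Since every time-frequency shift is unitary, the triangle inequality gives $\norm{R_n} \leq (n+1)\sum_{(j,k)\neq(0,0)} |\langle h_n,\pi(j\sqrt{n+1},k\sqrt{n+1})h_n\rangle|$. Invoking the classical identity $|\langle h_n,\pi(x,\omega)h_n\rangle| = |L_n(\pi(x^2+\omega^2))|\,e^{-\pi(x^2+\omega^2)/2}$ and evaluating at the adjoint lattice points, where $x^2+\omega^2 = (n+1)(j^2+k^2)$, the frame property is reduced to the single scalar inequality
\[
\Sigma_n := \sum_{(j,k)\neq(0,0)} \bigl|L_n\bigl(\pi(n+1)(j^2+k^2)\bigr)\bigr|\, e^{-\pi(n+1)(j^2+k^2)/2} < 1.
\]

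I would then establish $\Sigma_n < 1$ for all $n \geq 4$ by splitting into a finite range of small $n$ and an asymptotic regime. For large $n$, the argument $\pi(n+1)$ of the nearest-neighbor terms (those with $j^2+k^2=1$) sits at the fixed fraction $\approx \pi/4 \approx 0.785$ of the largest zero $\approx 4n$ of $L_n$, hence deep in the oscillatory region. Plancherel--Rotach asymptotics---or, better, Krasikov's explicit pointwise envelope bounds for the Laguerre function $e^{-x/2}L_n(x)$ in this region, which give $e^{-x}L_n(x)^2 \lesssim ((x-a)(b-x))^{-1/2}$ with $a\approx 0$, $b\approx 4n$---yield $|L_n(\pi(n+1))|\,e^{-\pi(n+1)/2} = O(n^{-1/2})$. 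The four nearest-neighbor terms thus contribute $O(n^{-1/2})$, while the Gaussian weight makes the terms with $j^2+k^2 \geq 2$ decay super-exponentially; summing gives $\Sigma_n = O(n^{-1/2})$, which is $<1$ once $n$ exceeds an explicit threshold $N_0$. For the finitely many remaining cases $4 \leq n \leq N_0$ I would verify $\Sigma_n < 1$ directly, bounding the rapidly convergent tail by an explicit geometric estimate and evaluating the few leading Laguerre values by rigorous numerics.

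The hard part is unambiguously the scalar estimate $\Sigma_n < 1$, and especially the boundary case $n=4$: a direct computation gives $\Sigma_4 \approx 0.994$, so the margin is razor-thin and every term up to $j^2+k^2=2$ must be controlled sharply. Consequently the asymptotic bound must come with \emph{explicit} constants, so that $N_0$ is concrete and the finite check is feasible---which is exactly where Krasikov-type uniform bounds, rather than bare asymptotics, become indispensable. A secondary subtlety is that $L_n$ oscillates, so $|L_n(\pi(n+1))|$ need not decay monotonically in $n$; the envelope bound has to dominate this oscillation uniformly across the whole range $n \geq N_0$.
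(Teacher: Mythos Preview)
Your proposal is correct and follows essentially the same route as the paper: the Janssen representation reduces the frame property to the scalar inequality $\Sigma_n<1$, which is then verified by splitting into a finite range (the paper takes $4\le n\le 36$ and checks numerically, with a crude polynomial tail bound) and an asymptotic regime ($n\ge 36$) handled via Krasikov's explicit envelope bounds for $e^{-x/2}L_n(x)$. The only point where the paper is more explicit than your sketch is the treatment of the second layer $j^2+k^2=2$ and the tail $j^2+k^2\ge 4$ for large $n$: since $2\pi(n+1)$ already lies beyond the largest zero of $L_n$, the Gaussian weight alone does not obviously dominate, and the paper invokes a separate Krasikov estimate (for the post-oscillatory region) together with the alternating-sign bound $|L_n(x)|\le (x/c)^n L_n(-c)$ to control these terms---a detail you would need to supply, but one that fits naturally into your outline.
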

As Theorem \ref{thm:main} shows, the opposite of the expected pattern holds true. So, at least for square lattices, equality can be achieved in the Theorem of Gröchenig and Lyubarskii once $n \geq 4$. The case $n=4$ is also the first Gabor system of the form
\begin{equation}\label{eq:Gn}
    \mathcal{G}_n = \mathcal{G}(h_n, (1/\sqrt{n+1}) \Z^2) = \{ \pi(\l) h_n \mid \l \in (1/\sqrt{n+1}) \Z^2 \},
\end{equation}
which has not been treated in the literature so far ($\pi(\l)$ is defined below in \eqref{eq:def_TFshift}). We will restrict our attention to the frame set of $h_n$ for rectangular lattices of the form $\L = a \Z \times b \Z$.

The case of the Gaussian function, i.e., $n=0$, has been completely settled (for general point sets) independently by Lyubarskii \cite{Lyu92} and in the articles of Seip \cite{Sei92} and Seip and Wallst\'en \cite{SeiWal92}. The case of the first Hermite function (and more generally odd functions) was treated by Lyubarskii and Nes \cite{LyuNes13} for rectangular lattices and extended to general lattices by one of the authors in \cite{Fau20}.

In \cite{Gro14}, Gröchenig came up with the following questions, which are now called the frame set conjecture for Hermite functions:

\medskip
$\bullet$ For $h_n$ with $n \in 2\N$, is the Gabor system $\mathcal{G}(h_n, \L)$ always a frame when $\vol(\L)<1$?

\medskip

$\bullet$ For $h_n$ with $n \in 2\N-1$, is the Gabor system $\mathcal{G}(h_n, \L)$ always a frame when $\vol(\L)<1$ and $\vol(\L) \neq \frac{N}{N+1}$, $N \in \N$?

\medskip

The questions were originally posed for rectangular lattices and are based on the restriction found in \cite{LyuNes13}. Due to the extension given in \cite{Fau20} they may be formulated for arbitrary lattices. However, the frame set conjecture for Hermite functions was answered negatively by Lemvig \cite{Lem16}, who provided first counter-examples. More exceptions by Horst, Lemvig, and Videbaek \cite{HorLemVid23} followed. A sufficiency criterion with numerical investigations was presented by Ghosh and Selvan \cite{GosSel23}.

This work is structured as follows. In Sec.~\ref{sec:notation} we introduce the general notation used in this work. In Sec.~\ref{sec:pre}, we recall essential results from Gabor analysis in Sec.~\ref{sec:Gabor}, provide necessary details on Laguerre polynomials in Sec.~\ref{sec:Laguerre} and introduce auxiliary results and outline the rough proof strategy in Sec.~\ref{sec:aux}. Sec.~\ref{sec:results} is devoted to new results on the frame set of Hermite functions. Our main result, Theorem \ref{thm:main}, is proved in Sec.~\ref{sec:proof}. The limitations of our approach are discussed in Sec.~\ref{sec:failure}. Finally, in Sec. \ref{sec:outside}, we present more applications of the approach we used to prove Theorem \ref{thm:main}. We enlarge what we call the safety region for $h_{15}$ and discover a new neighborhood in the frame set of $h_9$.

\subsection{Dedication}
All authors are particularly pleased to dedicate this paper to Karlheinz Gröchenig. M.~Faulhuber is grateful for Karlheinz' mentorship and invaluable advice throughout the last decade. I.~Shafkulovska expresses her sincere gratitude to Karlheinz for his guidance and enjoyable, instructive collaboration. I. Zlotnikov is grateful to Karlheinz for many fruitful discussions on various topics in mathematics during the postdoctoral fellowship at the University of Vienna.

\subsection{Notation}\label{sec:notation}
We mainly follow the notation from the textbook of Gröchenig \cite{Gro01}. In this work, as usual in time-frequency analysis, we use the following Fourier transform of a suitable function on $\R$:
\begin{equation}
    \mathcal{F} f(\omega) = \int_{\R} f(t) e^{-2 \pi i \omega t} \, dt.
\end{equation}
This extends to a unitary operator on $L^2(\R)$, hence we have Plancherel's theorem and Parseval's identity, respectively, given by
\begin{equation}
    \norm{f}_{L^2(\R)} = \norm{\mathcal{F} f}_{L^2(\R)}
    \quad \text{ and } \quad
    \langle f, g \rangle = \langle \mathcal{F} f, \mathcal{F} g \rangle.
\end{equation}
The inner product and norm on $L^2(\R)$ are, respectively, given by
\begin{equation}
    \langle f, g \rangle = \int_\R f(t) \overline{g(t)} \, dt
    \quad \text{ and } \quad
    \norm{f}_{L^2(\R)}^2 = \langle f, f \rangle. 
\end{equation}
Sometimes, we will pass an index to the inner product, denoting the specific Hilbert space, which will be either $L^2(\R)$ or $L^2(\R^2)$. Note that the Hermite functions are eigenfunctions of the Fourier transform, in particular we have
\begin{equation}\label{eq:Fhn}
    \mathcal{F} h_n = (-i)^n h_n.
\end{equation}

A Gabor system is a set of functions of the form
\begin{equation}
    \mathcal{G}(g, \L) = \{ \pi(\l) g \mid \l \in \L \},
\end{equation}
where $g \in L^2(\R)$ (of course, not the zero function throughout this work), $\L \subset \R^2$ discrete and $\pi(z)$ is a time-frequency shift by $z$, which is a unitary operator, acting by the rule
\begin{equation}\label{eq:def_TFshift}
    \pi(z) g = M_\omega T_x g, \quad z =(x,\omega) \in \R^2.
\end{equation}
Here, $T_x$ and $M_\omega$ denote the usual operators of translation (time-shift) and modulation (frequency-shift), which act on functions by the rules
\begin{equation}
    T_x g(t) = g(t-x)
    \qquad \text{ and } \qquad
    M_\omega g(t) = e^{2 \pi i \omega t} g(t),
\end{equation}
with the commutation relation
\begin{equation}\label{eq:CR}
    M_\omega T_x = e^{2 \pi i \omega x} T_x M_\omega.
\end{equation}
Often $\L$ is assumed to be a lattice in the time-frequency plane, i.e., a discrete co-compact subgroup of $\R^2$. We denote the co-volume of $\L$ simply by $\vol(\L)$ in this work and will actually only deal with the case of rectangular lattices, i.e., $\L = \L_{a,b} = a \Z \times b \Z$, with $a,b \in \R_+$. We will also identify the lattice $\L_{a,b}$ with the pair of parameters $(a,b) \in \R_+ \times \R_+$. Note that for a rectangular lattice we have
\begin{equation}
    \vol(\L_{a,b}) = ab
    \quad \text{ and its density is } \quad
    \delta = \vol(\L)^{-1} = 1/(ab).
\end{equation}
For a general lattice $\L = M \Z^2$, $M \in GL(2,\R)$, we have $\vol(\L) = |\det(M)|$. For the lattice $\Lambda$ we denote by $\Lambda^\circ$ its adjoint lattice, which is $J \L^\perp$, where $\L^\perp$ is the classical dual lattice and $J$ the standard symplectic matrix;
\begin{equation}
    J =
    \begin{pmatrix}
        0 & 1\\
        -1 & 0
    \end{pmatrix} .
\end{equation}
Note that while the classical dual lattice is defined by the condition
\begin{equation}
    \L^\perp = \{ \l^\perp \mid \l^\perp \cdot \l \in \Z \ \forall \l \in \L\},
\end{equation}
the adjoint lattice may be defined by the commutation relations of time-frequency shifts
\begin{equation}
    \L^\circ = \{ \l^\circ \mid \pi(\l) \pi(\l^\circ) = \pi(\l^\circ) \pi(\l) \ \forall \l \in \L \}.
\end{equation}
This is, by \eqref{eq:CR}, equivalent to saying that $\l^\circ \in \L^\circ$ if and only if $\sigma(\l^\circ,\l) \in \Z$, for all $\l \in \L$, where $\sigma(\l^\circ , \l) = \l^\circ \cdot J \l$ denotes the standard symplectic form. Note that for a rectangular lattice $\L_{a,b} = a \Z \times b \Z$ the adjoint lattice is given by $\L_{a,b}^\circ = (1/b) \Z \times (1/a) \Z$.

The adjoint (or symplectic dual) lattice makes the usage of a symplectic Poisson summation formula possible. For a (suitable) function $F$ on $\R^2$, this is
\begin{equation}\label{eq:PSF}
    \sum_{\l \in \L} F(\l + z) = \vol(\L)^{-1} \sum_{\l^\circ  \in \L^{\circ}} \mathcal{F}_\sigma F(\l^\circ) e^{2 \pi i \sigma(z,\l^\circ)},
\end{equation}
where $\mathcal{F}_\sigma$ is the symplectic Fourier transform, see, e.g., \cite{Gos17},
\begin{equation}
    \mathcal{F}_\sigma F(z) = \iint_{\R^2} F(\xi) e^{-2 \pi i \sigma(\xi,z)} \, d\xi.
\end{equation}
The symplectic Poisson summation formula \eqref{eq:PSF} is implicitly present throughout the whole article. We mainly apply it to the short-time Fourier Transform (STFT) of Hermite functions. 
The STFT of a function $f \in L^2(\R)$ with window $g \in L^2(\R)$ is given by
\begin{equation}
    V_g f(x,\omega) = \int\limits_{\R} f(t) \overline{g(t-x)} e^{-2\pi i \omega t} dt = \langle f, \pi(z) g \rangle, \quad z = (x,\omega) \in \R^2.
\end{equation}
A Gabor system $\mathcal{G}(g,\L)$ is a frame for $L^2(\R)$ if and only if there exist positive constants $0<A\leq B<\infty$, such that the frame inequality holds:
\begin{equation}\label{eq:frame}
    A \norm{f}_{L^2(\R)}^2 \leq \sum_{\l \in \L} | \langle f , \pi(\l) g \rangle |^2 \leq B \norm{f}_{L^2(\R)}^2, \quad \forall f \in L^2(\R).
\end{equation}
The constants $A$ and $B$ are called lower and upper frame bound, respectively, and the sharpest possible bounds are the spectral bounds of the associated Gabor frame operator $S_{g,\L}$, which acts on $f \in L^2(\R)$ by
\begin{equation}
    S_{g,\L} f = \sum_{\l \in \L} \langle f, \pi(\l) g \rangle \pi(\l) g.
\end{equation}
The set of all (rectangular) lattices such that the Gabor system $\mathcal{G}(g,\L)$ is a Gabor frame for $L^2(\R)$, is called the frame set $\mathfrak{F}$ of $g$:
\begin{equation}
    \mathfrak{F}(g) = \{ \L \subset \R^2 \mid \mathcal{G}(g,\L) \text{ is a frame} \}.
\end{equation}
This is also the terminology used by Gröchenig in \cite{Gro14}. An important result for functions in the modulation space $M^1(\R)$, defined in Def.~\ref{def:Mp} just below, is that the frame set is an open subset of $\{\L \mid \vol(\L) < 1\}$ and that it contains a neighborhood of 0 \cite[Thm.~2.1]{Gro14} (see also \cite{FeiKai04}, \cite{FeiKoz98}).
The latter statement guarantees that Gabor frames with a window in $M^1(\R)$ exist if the density of $\L$ is large enough. For rectangular lattices $\L_{a,b} = a \Z \times b \Z$, we can picture the frame set by identifying a lattice with the pair of parameters $(a,b) \in \R_+ \times \R_+$, as done in Fig.~\ref{fig:frame_set}. Due to \eqref{eq:Fhn}, the frame set for Hermite functions is symmetric in the sense that $(a,b) \in \mathfrak{F}(h_n)$ if and only if $(b,a) \in \mathfrak{F}(h_n)$.

\begin{figure}[h!t]
    \centering
    \includegraphics[width=.75\textwidth]{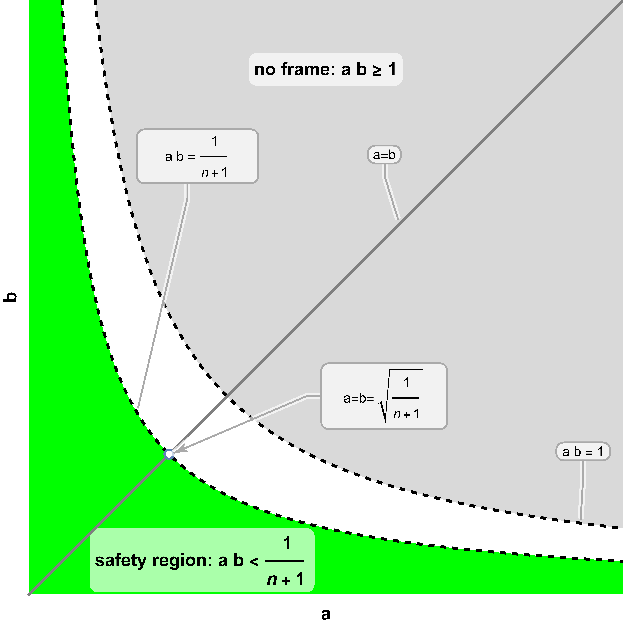}
    \caption{For the $n$-th Hermite function we know that the Gabor system $\mathcal{G}(h_n,a \Z \times b \Z)$ is a frame if $ab < 1/(n+1)$ and that it is not a frame if $a b \geq 1$. The frame set structure of $\mathfrak{F}(h_n)$, $n\geq 1$, between the ``safety region" and the ``no frame" region is currently only partially understood.}
    \label{fig:frame_set}
\end{figure}
For the $n$-th Hermite function we call the part of the frame set $\vol(\L) < 1 /(n+1)$ the safety region, as we have a frame by the theorem of Gröchenig and Lyubarskii \cite{GroLyu07}. For rectangular lattices $\L_{a,b} = a \Z \times b \Z$, $a,b \in \R_+$, the safety region in $\mathfrak{F}(h_n)$ consists of all pairs of parameters such that $ab < 1/(n+1)$.

\section{Preliminaries}\label{sec:pre}
\subsection{Results from Gabor analysis}\label{sec:Gabor}
We recall some well-established results from Gabor analysis. The modulation space $M^1(\R)$, which is also called Feichtinger's algebra due to its discoverer Feichtinger \cite{Fei81}, will provide a convenient set-up for our purposes. There are many equivalent definitions of this space and we refer to the treatise by Jakobsen~\cite{Jak18} for details. For the definition of the modulation spaces one needs to fix a window in the Schwartz space $\mathcal{S}(\R)$, but all windows lead to the same space with equivalent norms. Hence, we will pick the Gaussian window $h_0$ in our definition. More details can be found in the textbook of Gröchenig \cite[Chap.~11--12]{Gro01}.
\begin{definition}[Modulation space]\label{def:Mp}
    The modulation space $M^p(\R)$ consists of all tempered distributions $f \in \mathcal{S}'(\R)$ such that
    \begin{equation}
        \norm{f}_{M^p(\R)} = \norm{V_{h_0}f}_{L^p(\R^2)} < \infty.
    \end{equation}
\end{definition}
Note that $V_g g \in M^1(\R^2)$ if $g \in M^1(\R)$ \cite[Cor.~5.5]{Jak18}. Hence, Feichtinger's algebra is a convenient setting for us, as it contains $h_n$. Therefore, all manipulations in the sequel are justified and we do not run into any convergence issues (all series converge unconditionally).

The following result was first shown by Janssen \cite{Jan98} for rectangular lattices and for general lattices by Feichtinger and Luef \cite{FeiLue06}. We also refer to the work of Gröchenig and Koppensteiner \cite{GroKop19}, where all technical details are clarified.
\begin{proposition}[Fundamental identity of Gabor analysis]
    Assume we have functions $f_1, f_2, g_1, g_2 \in M^1(\R)$.
    Then
    \begin{equation}\label{eq:FIGA}
        \sum_{\l \in \L} V_{g_1} f_1(\l) \, \overline{V_{g_2} f_2 (\l)} = \vol(\L)^{-1} \sum_{\l^\circ \in \L^\circ} V_{g_1} g_2(\l^\circ) \, \overline{V_{f_1} f_2(\l^\circ)}.
    \end{equation}
\end{proposition}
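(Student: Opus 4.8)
The plan is to view the left-hand side as the value at the origin of a lattice sum of one function on $\R^2$ and then transfer it to the adjoint lattice $\L^\circ$ via the symplectic Poisson summation formula \eqref{eq:PSF}. Set
\[
    F(z) = V_{g_1} f_1(z)\,\overline{V_{g_2} f_2(z)}, \qquad z \in \R^2 .
\]
Because $f_i, g_i \in M^1(\R)$, each cross-STFT $V_{g_i} f_i$ lies in $M^1(\R^2)$, and since $M^1(\R^2)$ is a Banach algebra under pointwise multiplication, the product $F$ belongs to $M^1(\R^2)$ as well. This membership guarantees that both sides of \eqref{eq:PSF}, and of the asserted identity, converge absolutely, so no convergence issue arises. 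Evaluating \eqref{eq:PSF} at $z = 0$ turns the statement into
\[
    \sum_{\l \in \L} V_{g_1} f_1(\l)\,\overline{V_{g_2} f_2(\l)} = \vol(\L)^{-1} \sum_{\l^\circ \in \L^\circ} \mathcal{F}_\sigma F(\l^\circ),
\]
and everything reduces to identifying $\mathcal{F}_\sigma F$ on the adjoint lattice.

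The core step is the explicit evaluation of $\mathcal{F}_\sigma F$. I would write both factors of $F$ as time integrals, substitute them into the definition of $\mathcal{F}_\sigma$, and perform the frequency integration first. Writing $z = (x, \omega)$ and the output variable as $(\xi, \eta)$, the $\omega$-integral contributes a Dirac mass that forces the two time variables to differ by exactly $\xi$. After the substitution $u = s - x$ in the translation variable, the remaining double integral separates into a product of two one-dimensional integrals, one built from the pair $g_1, g_2$ and one from $f_1, f_2$. Each of these is recognized directly as a (conjugated) cross-STFT evaluated at the reflected point $-(\xi,\eta)$, so that
\[
    \mathcal{F}_\sigma F(\xi, \eta) = V_{g_1} g_2(-\xi, -\eta)\,\overline{V_{f_1} f_2(-\xi, -\eta)} .
\]

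It then remains to substitute this back into the Poisson summation formula. Since the adjoint lattice is symmetric under $\l^\circ \mapsto -\l^\circ$, reindexing the right-hand sum absorbs the reflection and produces exactly $\vol(\L)^{-1} \sum_{\l^\circ \in \L^\circ} V_{g_1} g_2(\l^\circ)\,\overline{V_{f_1} f_2(\l^\circ)}$, which is the claim. I expect the main obstacle to be the evaluation of $\mathcal{F}_\sigma F$: one has to keep careful track of the several complex exponentials so that the phases combine correctly, and one should justify the frequency integration rigorously rather than treat it as a formal manipulation with a delta function. The cleanest way to secure the latter is to note that all bilinear expressions involved depend continuously on their $M^1$-arguments, so the identity may first be verified for Schwartz inputs, where Fubini's theorem and Fourier inversion apply without difficulty, and then extended to all of $M^1(\R)$ by density.
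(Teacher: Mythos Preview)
Your argument is correct and follows the same overall route the paper indicates: apply the symplectic Poisson summation formula \eqref{eq:PSF} to $F = V_{g_1}f_1\,\overline{V_{g_2}f_2}$ and identify $\mathcal{F}_\sigma F$. The only difference is in how that identification is carried out. The paper points to the orthogonality relations \eqref{eq:OR} (Moyal's identity), which together with the covariance of the STFT under time--frequency shifts let one read off $\mathcal{F}_\sigma F(\zeta)$ as a product of two inner products without expanding any integrals. You instead unpack both STFTs, integrate out the frequency variable, and recognize the resulting factors as $V_{g_1}g_2(-\zeta)\,\overline{V_{f_1}f_2(-\zeta)}$ by hand. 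This is really the same computation---your direct calculation is essentially a re-derivation of a shifted Moyal identity---so the two approaches differ only in packaging: invoking \eqref{eq:OR} is shorter and hides the phase bookkeeping, while your explicit route is more self-contained. Your remarks on $M^1$-regularity, absolute convergence, and the density argument for justifying the formal $\omega$-integration are all appropriate.
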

The proof uses symplectic Poisson summation \eqref{eq:PSF} and the orthogonality relations
\begin{equation}\label{eq:OR}
         \langle V_{g_1} f_1, V_{g_2} f_2 \rangle_{L^2(\R^2)} = \langle f_1, f_2 \rangle_{L^2(\R)} \overline{\langle g_1, g_2 \rangle}_{L^2(\R)}.
     \end{equation}
The latter relation is the STFT-version of Moyal's identity for the Wigner distribution introduced in \cite{Moy49} (see also \cite[Chap.~6]{Gos17}, \cite[Chap.~4.3]{Gro01}).
We continue with the following observation, which is a slight variation of \cite[Lem.~4.2.1]{Gro01} in the textbook of Gröchenig. The proof employs the Cauchy-Schwarz inequality, in particular, the fact that equality can only be achieved for linearly dependent vectors.
\begin{lemma}\label{lem:Vgg}
    For $g \in L^2(\R)$ we have $V_g g(0,0) \in \R$. Moreover, we have
    \begin{equation}
        V_g g(0,0) = \norm{g}_{L^2(\R)}^2
    \end{equation}
    and the following estimate for all $(x,\omega) \in \R^2 \setminus \{(0,0)\}$:
    \begin{equation}
        |V_g g(x,\omega)| < V_g g(0,0).
    \end{equation}
\end{lemma}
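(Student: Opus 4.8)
The plan is to dispatch the two identities at the origin by direct computation and to obtain the strict inequality from the equality case of Cauchy--Schwarz. Recall that $V_g g(x,\omega) = \langle g, \pi(x,\omega) g \rangle$. At the origin $\pi(0,0) = M_0 T_0$ is the identity operator, so $V_g g(0,0) = \langle g, g \rangle = \norm{g}_{L^2(\R)}^2$, which is a (strictly positive, since $g$ is not the zero function) real number. This simultaneously settles the reality claim and the norm identity.

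For the estimate I would apply Cauchy--Schwarz to $\langle g, \pi(z) g \rangle$ together with the fact that $\pi(z)$ is unitary, so that $\norm{\pi(z) g}_{L^2(\R)} = \norm{g}_{L^2(\R)}$. This yields
\[
    |V_g g(x,\omega)| = |\langle g, \pi(z) g \rangle| \leq \norm{g}_{L^2(\R)} \, \norm{\pi(z) g}_{L^2(\R)} = \norm{g}_{L^2(\R)}^2 = V_g g(0,0),
\]
valid for every $z=(x,\omega) \in \R^2$. Thus the whole content of the lemma is to upgrade this $\leq$ to a strict $<$ once $z \neq (0,0)$.

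Equality in Cauchy--Schwarz forces $g$ and $\pi(z) g$ to be linearly dependent, i.e.\ $\pi(z) g = c\, g$ for some scalar $c$; unitarity of $\pi(z)$ then gives $|c|=1$. Written out, this reads $e^{2\pi i \omega t} g(t-x) = c\, g(t)$ for a.e.\ $t$. I would rule this out by cases. If $x \neq 0$, taking absolute values gives $|g(t-x)| = |g(t)|$ a.e., so $|g|^2$ is periodic with period $x$; but then $\int_\R |g|^2$ is forced to be either $0$ or $+\infty$, contradicting $0 < \norm{g}_{L^2(\R)}^2 < \infty$. If $x=0$ and $\omega \neq 0$, the relation becomes $(e^{2\pi i \omega t} - c)\, g(t) = 0$ a.e.; since $e^{2\pi i \omega t} = c$ holds only on a discrete (measure-zero) set, this forces $g = 0$ a.e., again a contradiction. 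Hence equality cannot occur for $z \neq (0,0)$, and the inequality is strict.

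The only delicate point is the equality analysis in the case $x \neq 0$, where one must argue cleanly that a nonzero $L^2(\R)$ function cannot have $|g|^2$ periodic. The cleanest route is the fundamental-domain computation $\int_\R |g(t)|^2 \, dt = \sum_{k \in \Z} \int_0^{|x|} |g(t)|^2 \, dt$, which diverges unless $g$ vanishes on $[0,|x|]$ and hence, by periodicity, everywhere. Everything else reduces to invoking the unitarity of $\pi(z)$ and the sharp case of Cauchy--Schwarz, so I expect this periodicity argument to be the main (and only) obstacle.
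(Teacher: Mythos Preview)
Your proof is correct and follows exactly the approach the paper indicates: the paper does not spell out a proof but remarks that it ``employs the Cauchy--Schwarz inequality, and when equality can be achieved,'' which is precisely what you do. Your case analysis ruling out nontrivial eigenvectors of $\pi(z)$ in $L^2(\R)$ is the standard way to handle the equality case, so nothing is missing.
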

The following notion was introduced in \cite{TolOrr95}. A function $g$ satisfies \textit{Condition A} if
\begin{equation}\label{eq:cond_A}
    \sum_{\l^\circ \in \L^\circ} | V_g g (\l^\circ) | < \infty. \tag{Condition A}
\end{equation}
The next result was shown by Tolimieri and Orr \cite{TolOrr95} under the assumption that the window function $g$ indeed satisfies \eqref{eq:cond_A}. We only add the convenient choice that we have $g \in M^1(\R)$, so that we can directly use the result.
\begin{lemma}
    Let $g \in M^1(\R)$, then it satisfies \eqref{eq:cond_A} and, consequently, the Gabor system $\mathcal{G}(g,\L)$ has a finite upper frame bound $B$, which is at most
    \begin{equation}\label{eq:B}
        B \leq \vol(\L)^{-1} \sum_{\l^\circ \in \L^\circ} |V_g g(\l^\circ)| < \infty.
    \end{equation}
\end{lemma}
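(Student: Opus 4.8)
The plan is to split the assertion into its two components: first, that $g \in M^1(\R)$ forces \eqref{eq:cond_A}, and second, that \eqref{eq:cond_A} already delivers the quantitative bound \eqref{eq:B}. The second component is exactly the theorem of Tolimieri and Orr, which is valid under Condition A, so strictly speaking the only thing we must supply is the verification of Condition A, after which we may ``directly use the result'' as announced. Nevertheless, since both the Fundamental Identity of Gabor Analysis \eqref{eq:FIGA} and Lemma \ref{lem:Vgg} are available to us, I would give a self-contained derivation of \eqref{eq:B} through these tools rather than leaning on the external citation.

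For Condition A I would begin from the cited fact that $g \in M^1(\R)$ implies $V_g g \in M^1(\R^2)$. The relevant structural property of Feichtinger's algebra on $\R^2$ is its continuous embedding into the Wiener amalgam space $W(C_0,\ell^1)$: every $F \in M^1(\R^2)$ is continuous, and its samples over an arbitrary lattice form an absolutely summable family, with $\sum_{\l^\circ} |F(\l^\circ + z)|$ bounded uniformly in $z$. Applying this with $F = V_g g$ and the lattice $\L^\circ$ (again a lattice, equal to $(1/a)\Z \times (1/b)\Z$ in the rectangular case) yields $\sum_{\l^\circ \in \L^\circ} |V_g g(\l^\circ)| < \infty$, which is precisely \eqref{eq:cond_A}.

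For the bound I would apply \eqref{eq:FIGA} with $f_1 = f_2 = f$ and $g_1 = g_2 = g$. The left-hand side then becomes $\sum_{\l \in \L} |V_g f(\l)|^2 = \sum_{\l \in \L} |\langle f, \pi(\l) g\rangle|^2$, while the right-hand side is $\vol(\L)^{-1} \sum_{\l^\circ \in \L^\circ} V_g g(\l^\circ)\, \overline{V_f f(\l^\circ)}$. Since the left-hand side is real and nonnegative, it coincides with the modulus of the right-hand side, which the triangle inequality bounds by $\vol(\L)^{-1} \sum_{\l^\circ \in \L^\circ} |V_g g(\l^\circ)|\,|V_f f(\l^\circ)|$. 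By Lemma \ref{lem:Vgg} applied to $f$ one has $|V_f f(\l^\circ)| \leq V_f f(0,0) = \norm{f}_{L^2(\R)}^2$ for every $\l^\circ$, so factoring out $\norm{f}_{L^2(\R)}^2$ leaves exactly the constant $\vol(\L)^{-1} \sum_{\l^\circ \in \L^\circ} |V_g g(\l^\circ)|$, which is finite by the first part. This is the claimed value of $B$ in \eqref{eq:B}.

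The one technical point demanding care is that \eqref{eq:FIGA} is stated for $f \in M^1(\R)$, whereas the frame inequality must hold for all $f \in L^2(\R)$. I would settle this by the standard density argument: the estimate above holds with a uniform constant on the dense subspace $M^1(\R) \subset L^2(\R)$, and since the analysis map $f \mapsto (\langle f, \pi(\l) g\rangle)_{\l \in \L}$ is bounded, the inequality extends to all of $L^2(\R)$ by continuity. I expect the genuine conceptual obstacle to lie in the sampling step of the second paragraph, namely the passage from regularity-and-decay of $V_g g$ to honest $\ell^1$ summability of its lattice samples via the embedding $M^1(\R^2) \hookrightarrow W(C_0,\ell^1)$, rather than in the algebraic manipulation leading to \eqref{eq:B}.
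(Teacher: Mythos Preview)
Your proposal is correct and follows exactly the route the paper indicates: the paper's entire proof is the one-line remark that the result ``follows easily from \eqref{eq:FIGA} and Lemma \ref{lem:Vgg},'' and you have faithfully unpacked that hint, including the density argument needed to pass from $f \in M^1(\R)$ to $f \in L^2(\R)$. Your verification of Condition A via $V_g g \in M^1(\R^2) \hookrightarrow W(C_0,\ell^1)$ is the standard mechanism the paper tacitly relies on through its earlier citation of \cite[Cor.~5.5]{Jak18}.
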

The proof of the previous result follows easily from \eqref{eq:FIGA} and Lemma \ref{lem:Vgg}.

\begin{lemma}[Janssen representation]
    For a window $g \in M^1(\R)$ and a lattice $\L$, the frame operator $S = S_{g,\L}$ can be expressed as
    \begin{equation}
        S = \vol(\L)^{-1} \sum_{\l^\circ \in \L^\circ} \langle g, \pi(\l^\circ) g \rangle \pi(\l^\circ).
    \end{equation}
\end{lemma}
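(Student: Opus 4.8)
The plan is to verify the identity in its weak (sesquilinear) form, i.e.\ to check that $\langle S f_1, f_2 \rangle = \langle R f_1, f_2 \rangle$ for all $f_1, f_2$ in the dense subspace $M^1(\R)$, where $R$ denotes the candidate operator $\vol(\L)^{-1} \sum_{\l^\circ \in \L^\circ} \langle g, \pi(\l^\circ) g \rangle \pi(\l^\circ)$. Since both sides are bounded sesquilinear forms, agreement on a dense set forces $S = R$ as bounded operators on $L^2(\R)$.

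First I would rewrite the matrix coefficient of the frame operator directly from its definition:
\begin{equation}
    \langle S_{g,\L} f_1, f_2 \rangle = \sum_{\l \in \L} \langle f_1, \pi(\l) g \rangle \langle \pi(\l) g, f_2 \rangle = \sum_{\l \in \L} V_g f_1(\l) \, \overline{V_g f_2(\l)},
\end{equation}
where I have used $\langle \pi(\l) g, f_2 \rangle = \overline{V_g f_2(\l)}$. This is precisely the left-hand side of the Fundamental Identity of Gabor Analysis \eqref{eq:FIGA} with the choices $g_1 = g_2 = g$. Applying \eqref{eq:FIGA} transforms the sum over $\L$ into a sum over the adjoint lattice $\L^\circ$:
\begin{equation}
    \langle S_{g,\L} f_1, f_2 \rangle = \vol(\L)^{-1} \sum_{\l^\circ \in \L^\circ} V_g g(\l^\circ) \, \overline{V_{f_1} f_2(\l^\circ)}.
\end{equation}
It then remains only to identify the two STFT factors: by definition $V_g g(\l^\circ) = \langle g, \pi(\l^\circ) g \rangle$, while $\overline{V_{f_1} f_2(\l^\circ)} = \overline{\langle f_2, \pi(\l^\circ) f_1 \rangle} = \langle \pi(\l^\circ) f_1, f_2 \rangle$. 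Substituting these, the right-hand side becomes
\begin{equation}
    \vol(\L)^{-1} \sum_{\l^\circ \in \L^\circ} \langle g, \pi(\l^\circ) g \rangle \, \langle \pi(\l^\circ) f_1, f_2 \rangle = \Big\langle \vol(\L)^{-1} \sum_{\l^\circ \in \L^\circ} \langle g, \pi(\l^\circ) g \rangle \, \pi(\l^\circ) f_1, f_2 \Big\rangle = \langle R f_1, f_2 \rangle,
\end{equation}
which is exactly the desired weak identity.

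The one point requiring care is the well-definedness of $R$ and the legitimacy of pulling the sum outside the inner product in the last step. Here the hypothesis $g \in M^1(\R)$ is essential: it guarantees \eqref{eq:cond_A}, so that $\sum_{\l^\circ \in \L^\circ} |\langle g, \pi(\l^\circ) g \rangle| = \sum_{\l^\circ \in \L^\circ} |V_g g(\l^\circ)| < \infty$. Since each $\pi(\l^\circ)$ is unitary, hence of operator norm one, the series defining $R$ converges absolutely in operator norm to a bounded operator, and the interchange of summation with the inner product is then justified by absolute convergence. I expect this bookkeeping---rather than any conceptual difficulty---to be the only real obstacle; the heart of the argument is the single application of \eqref{eq:FIGA}, whose own proof (via symplectic Poisson summation and the orthogonality relations \eqref{eq:OR}) already carries the analytic weight.
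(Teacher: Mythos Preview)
Your proof is correct and follows exactly the route the paper indicates: the paper does not spell out a proof but simply remarks that the result ``follows basically from \eqref{eq:FIGA},'' and your argument is precisely the standard fleshing-out of that remark---apply FIGA with $g_1=g_2=g$ to the weak form $\langle Sf_1,f_2\rangle$, identify the STFT factors, and use \eqref{eq:cond_A} to justify the operator-norm convergence of $R$.
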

The result is well-known and it follows basically from \eqref{eq:FIGA}.
The last result in this section is the following sufficiency criterion from \cite{Wie13_PhD}, also known as the Janssen test. As this will be the basis for all our proofs in Sec.~\ref{sec:results}, we provide the proof for this result.
\begin{lemma}[Janssen test]\label{lem:Janssen_test}
    Let $\Lambda$ be a lattice and $g$ a window with $\norm{g}_{L^2(\R)} = 1$. If 
    \begin{equation}\label{eq:Janssen_test}
        \sum\limits_{\lambda^\circ \in \Lambda^\circ} |V_g g(\lambda^o)| < 2,
    \end{equation}
    then $\mathcal{G}(g, \Lambda)$ constitutes a Gabor frame for $L^2(\R)$.
\end{lemma}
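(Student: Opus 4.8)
The plan is to read the frame operator $S = S_{g,\Lambda}$ through its Janssen representation and to view it as a small perturbation of a multiple of the identity. First I would write out
\begin{equation}
    S = \vol(\Lambda)^{-1} \sum_{\lambda^\circ \in \Lambda^\circ} V_g g(\lambda^\circ)\, \pi(\lambda^\circ)
\end{equation}
(using that $\langle g, \pi(\lambda^\circ) g\rangle = V_g g(\lambda^\circ)$) and isolate the term $\lambda^\circ = (0,0)$. By Lemma \ref{lem:Vgg} we have $V_g g(0,0) = \norm{g}_{L^2(\R)}^2 = 1$, and since $\pi(0,0) = \mathrm{Id}$, this splits the operator as
\begin{equation}
    S = \vol(\Lambda)^{-1}\,\mathrm{Id} + R,
    \qquad
    R := \vol(\Lambda)^{-1} \sum_{\lambda^\circ \neq 0} V_g g(\lambda^\circ)\, \pi(\lambda^\circ).
\end{equation}
Thus $S$ differs from the scalar operator $\vol(\Lambda)^{-1}\,\mathrm{Id}$ only by the off-diagonal remainder $R$, and the whole proof reduces to controlling $\norm{R}_{\mathrm{op}}$.

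Next I would estimate the remainder. Since every time-frequency shift $\pi(\lambda^\circ)$ is unitary, the triangle inequality for the operator norm gives
\begin{equation}
    \norm{R}_{\mathrm{op}} \leq \vol(\Lambda)^{-1} \sum_{\lambda^\circ \neq 0} |V_g g(\lambda^\circ)|.
\end{equation}
The decisive observation is that the hypothesis \eqref{eq:Janssen_test}, after subtracting the $\lambda^\circ=0$ contribution (which equals $1$ by Lemma \ref{lem:Vgg}), is exactly equivalent to $\sum_{\lambda^\circ \neq 0} |V_g g(\lambda^\circ)| < 1$; the threshold $2$ is engineered precisely so that the diagonal mass $1$ and the off-diagonal tail split the condition. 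Hence $\norm{R}_{\mathrm{op}} < \vol(\Lambda)^{-1}$, i.e.\ the perturbation is strictly smaller, in operator norm, than the coefficient of the identity.

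Finally I would convert this into frame bounds. As a frame operator, $S$ is self-adjoint and positive, so for every $f$ with $\norm{f}_{L^2(\R)} = 1$,
\begin{equation}
    \sum_{\lambda \in \Lambda} |\langle f, \pi(\lambda) g\rangle|^2 = \langle Sf, f\rangle = \vol(\Lambda)^{-1} + \langle Rf, f\rangle \geq \vol(\Lambda)^{-1} - \norm{R}_{\mathrm{op}} =: A > 0,
\end{equation}
which supplies a strictly positive lower frame bound. For the upper bound, the very finiteness of the sum in \eqref{eq:Janssen_test} means $g$ satisfies \eqref{eq:cond_A}, so the estimate \eqref{eq:B} furnishes a finite $B$. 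Both bounds being positive and finite, $\mathcal{G}(g,\Lambda)$ is a frame.

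The hard part here is not any single computation but the bookkeeping of the $\vol(\Lambda)^{-1}$ factors together with the recognition that passing from an operator-norm bound on $R$ to a genuine \emph{lower} frame bound relies on the self-adjointness and positivity of $S$: the perturbation estimate only controls $|\langle Rf, f\rangle|$, and it is the structural positivity of the frame operator that turns $\vol(\Lambda)^{-1} - \norm{R}_{\mathrm{op}} > 0$ into the lower inequality in \eqref{eq:frame}. One should also be careful that Condition A (rather than membership in $M^1(\R)$) is all that is needed to justify both the Janssen representation and the upper bound, which is what keeps the statement applicable under the sole hypothesis \eqref{eq:Janssen_test}.
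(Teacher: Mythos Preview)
Your argument is correct and follows essentially the same route as the paper: both split the Janssen representation into the $\lambda^\circ=0$ term and a remainder, bound the remainder in operator norm by $\sum_{\lambda^\circ\neq 0}|V_gg(\lambda^\circ)|<1$ via the triangle inequality and unitarity of $\pi(\lambda^\circ)$, and invoke Condition~A for the upper bound. The only cosmetic difference is that the paper phrases the conclusion as $\norm{\vol(\Lambda)S-I}_{\mathrm{op}}<1$ and appeals to the Neumann series for invertibility, whereas you extract the lower frame bound directly from $\langle Sf,f\rangle\ge \vol(\Lambda)^{-1}-\norm{R}_{\mathrm{op}}$; note that this last inequality needs only $|\langle Rf,f\rangle|\le\norm{R}_{\mathrm{op}}$, not the positivity of $S$ that you emphasize in your closing paragraph.
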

\begin{proof}
    We use the proof given in \cite{Wie13_PhD} and compare the scaled frame operator $\vol(\L) S$, with $g \in M^1(\R)$ and $\L \subset \R^2$ a lattice, to the identity operator $I = I_{L^2(\R) \to L^2(\R)}$. Noting that $\pi(0) = I$, and using the Janssen representation of $S$, we get
    \begin{align}
        \norm{\vol(\L) S - I}_{L^2(\R) \to L^2(\R)}
        & = \norm{\sum_{\l^\circ \in \L^\circ \setminus \{ 0 \}} \langle g, \pi(\l^\circ) g \rangle \pi(\l^\circ)}_{L^2(\R) \to L^2(\R)}\\
        & \leq \sum_{\l^\circ \in \L^\circ \setminus \{0\}} |  \langle g, \pi(\l^\circ) g \rangle | = \widetilde{B}<1.
    \end{align}
    By our assumption \eqref{eq:Janssen_test}, we see that $\vol(\L) S$ is less than 1 away from the identity operator. Therefore we can build the Neumann series of the inverse and estimate its norm by
    \begin{equation}
        \norm{(\vol(\L)S)^{-1}}_{L^2(\R) \to L^2(\R)} \leq \sum_{k=0}^\infty \widetilde{B}^k = \frac{1}{1-\widetilde{B}},
    \end{equation}
    which shows that $\norm{(\vol(\L)S)^{-1}} \leq 1/(1-\widetilde{B})$ and so the frame operator is invertible. It follows that the Gabor system has a lower frame bound. The upper frame bound is finite due to \eqref{eq:B}, hence $\mathcal{G}(g,\L)$ is a frame.
\end{proof}

To the best of our knowledge, this tool has so far not been used in rigorous analytic investigations of frame sets. It was introduced in \cite{Tsc00_Master} for finite Gabor systems in $\C^L$, $L \in \N$ and then presented in \cite[Chap.~3.1, Cor.~3.1]{Wie13_PhD} for Gabor systems in $L^2(\R)$. In both cases it was combined with numerical investigations to have heuristic arguments for certain optimal sampling strategies. It was again discussed in \cite[Sec.~3]{Fau21-AA} for heuristic reasons and in \cite[Sec.~6]{Skr21} an extension of the result was presented for Gabor $g$-frames. It was called ``Janssen condition" in \cite[Chap.~4.1]{Tsc00_Master} as the criterion arises from the Janssen representation of the frame operator and we also refer to \cite[Sec.~6]{Fei19} for further reading.

\subsection{Laguerre polynomials}\label{sec:Laguerre}
Recall that the $n$-th Laguerre polynomial, denoted by $\mathcal{L}_n$, is defined on the positive real axis $\R_+$ (but can be extended to $\R$) by
\begin{equation}
    \mathcal{L}_n(x) = \sum_{k=0}^n \binom{n}{k} \frac{(-x)^k}{k!}.
\end{equation}
The Laguerre polynomials are orthogonal on $\R_+$ with respect to the exponential weight $w(x) = e^{-x}$ \cite[Sec.~5.1, eq.~(5.1.1)]{Sze39} and $\mathcal{L}_n(x)$ has $n$ distinct roots in $\R_+$ \cite[Chap.~3.3, Chap.~6.31]{Sze39}. The largest root $x_n$ of $\mathcal{L}_n$ satisfies \cite[Thm.6.31.2]{Sze39}
\begin{equation}
    x_n < 2n+1+\sqrt{(2n+1)^2+1/4},
    \quad \text{ and } \quad
    x_n \sim 4n.
\end{equation}

Our interest in Laguerre polynomials comes from the following result, which may be looked up in \cite[Chap.~1.9, Thm.~1.104]{Fol89}. Note that $V_gf$ is defined slightly differently in \cite[Chap.~1.8, eq.~(1.88)]{Fol89} (where it is more or less the ambiguity function), which makes the appearance of a phase factor explicit in our work.
\begin{lemma}\label{lem:Vh_L}
    For the $n$-th Hermite function $h_n$, we have 
    \begin{equation}\label{eq:Vh_L}
        V_{h_n} h_n (x,\omega) = e^{-\pi i x \omega} \mathcal{L}_n\left(\pi (x^2+\omega^2)\right) e^{-\pi(x^2+\omega^2)/2}.
    \end{equation}
\end{lemma}
In the sequel, our goal will always be to find $\L$ such that
\begin{equation}
    \sum_{\l^\circ \in \L^\circ} |V_{h_n} h_n(\l^\circ)| = \sum_{\l^\circ \in \L^\circ} \left|\mathcal{L}_n \left( \pi |\l^\circ|^2 \right) \right| e^{-\pi |\l^\circ|^2/2} < 2.
\end{equation}
For square lattices of density $\delta$, this becomes explicitly \eqref{eq:explicit_square_janssen} below.

\subsection{Auxiliary results and proof strategy}\label{sec:aux}
In this section, we present some auxiliary results and present the general proof strategy.

Upper bounds for Laguerre polynomials were investigated in a number of papers, see, e.g., \cite{MR2168916}, \cite{MR2287374}, \cite{MR1662719}. There is a classical estimate, known as Szeg\H{o} bound, which is
\begin{equation}\label{eq:Szego}
    |\mathcal{L}_n(x)| \le e^{x/2} \quad \text{ for } x \ge 0,
\end{equation}
proved by Szeg\H{o} \cite[Sec.~7.21]{Sze39}. However, \eqref{eq:Szego} will not be sufficient for our purposes as, using \eqref{eq:Vh_L}, it only gives that $|V_{h_n} h_n|$ is bounded by 1, which we also know from Lemma~\ref{lem:Vgg}. It is worth mentioning that, in the other direction, Lemma \ref{lem:Vgg} in combination with \eqref{eq:Vh_L} implies the Szeg\H{o} bound. We need more delicate estimates in the sequel.

First, observe that it is easy to obtain a crude polynomial bound for $\mathcal{L}_n$, as we have
\begin{equation}\label{eq:bound_xn}
    |\mathcal{L}_n(x)| \leq \sum_{k = 0}^n \binom{n}{k} \frac{x^k}{k!} \leq (n+1) \binom{n}{\lfloor n/2 \rfloor} x^n \leq (n+1) \, 2^n x^n, \quad \forall x \geq 1.
\end{equation}
Here, the notation ${\lfloor m \rfloor}$ stands for the greatest integer less than or equal to $m$. Note that the binomial coefficient may be huge, but this estimate becomes useful once $x$ is, say, larger than $x_n$, the largest root of $\mathcal{L}_n$.
Our estimate~\eqref{eq:bound_xn} will be useful in the calculations to prove Theorem~\ref{thm:main} for $n$ sufficiently small, which in our case is $n \leq 32$.

In the other direction, i.e., for large $n$, we use the following estimates, obtained from contemporary bounds on the values of Laguerre polynomials by Krasikov \cite{MR2168916}, \cite{MR2287374}.  We are grateful to the anonymous referee for improving our original estimate \eqref{Lag_est3} and we also use the proof provided by the referee for this particular estimate.
\begin{lemma}\label{lem:Lag_est}
    For $n \geq 6$, the following estimates are simultaneously true.
\begin{equation}\label{Lag_est1}
  \mathcal{L}_n(-\pi(n+1)) \le  2^n e^{\pi(n+1)}.
\end{equation}
\begin{equation}\label{Lag_est2}
  |\mathcal{L}_n(\pi(n+1))| \leq e^{\pi(n+1)/2} \sqrt{\frac{2}{n}}.   
\end{equation}
\begin{equation}\label{Lag_est3}
  |\mathcal{L}_n(2\pi(n+1))| \leq e^{\pi(n+1)} \sqrt{\frac{2}{\pi}} \, e^{-0.28(n+1)}.
\end{equation}

\end{lemma}
\begin{proof}
    The proof of~\eqref{Lag_est1} is simple:
    \begin{equation*}
      \mathcal{L}_n(-\pi(n+1)) = \sum_{k=0}^n \binom{n}{k} \frac{(\pi(n+1))^k}{k!} \le \binom{n}{\left \lfloor{\frac{n}{2}}\right \rfloor} e^{\pi(n+1)} \le 2^n e^{\pi(n+1)},
      \quad \forall n \in \N.
    \end{equation*}
    
    To prove~\eqref{Lag_est2}, we invoke the following estimate from \cite{MR2168916} (see also \cite[Thm.~1]{MR2287374}): for $n \ge 2$ and $x \in [q^2,s^2]$, where $q = \sqrt{n+1} - \sqrt{n}$ and $s = \sqrt{n+1} + \sqrt{n}$, the inequality 
    \begin{equation}\label{Kras1}
        |\mathcal{L}_n(x)| \le e^{x/2} \sqrt{ \frac{s^2-q^2}{(x-q^2)(s^2 - x)}}
    \end{equation}
    holds true. Note that $x = \pi (n+1)$ belongs to $[q^2,s^2]$, since
    $$
        q^2 = (\sqrt{n+1} - \sqrt{n})^2 \le 2n + 1 \le \pi(n+1) \le 4n \le (\sqrt{n+1} + \sqrt{n})^2 =  s^2.
    $$
    Next, for $x = \pi(n+1)$, we estimate the value inside the square root as follows\footnote{We use $4\pi-\pi^2\geq 2.5$, $6\pi - 2\pi^2\geq -1$, and $(1- \pi)^2\leq  5$.}:
    \begin{align}
        \frac{s^2-q^2}{(x-q^2)(s^2 - x)}
        & = \frac{(\sqrt{n+1} + \sqrt{n})^2 - (\sqrt{n+1} - \sqrt{n})^2}{\left(\pi(n+1) - (\sqrt{n+1} - \sqrt{n})^2\right) \left((\sqrt{n+1} + \sqrt{n})^2 - \pi(n+1)\right)}\\
        & = \frac{ 4 \sqrt{n(n+1)} }{(4\pi-\pi^2)n^2 + (6\pi - 2\pi^2) n - (1- \pi)^2}
        \le \frac{ 4n + 2}{2.5 \ n^2 - n - 5} 
        \leq \frac{2}{n},
    \end{align}
    provided $n$ is at least 6.
    Therefore, using~\eqref{Kras1}, we arrive at the desired estimate
    $$
        |\mathcal{L}_n(\pi(n+1))| \leq e^{\pi(n+1)/2} \sqrt{\frac{2}{n}}, \quad n \geq 6.
    $$
    
    It remains to prove~\eqref{Lag_est3}. We recall that $q = \sqrt{n+1} - \sqrt{n} \le s = \sqrt{n+1} + \sqrt{n}$.  Clearly, $2 \pi (n+1) \ge s^2$. Therefore, we can use Theorem~5 from \cite{MR2287374} that asserts the estimate
    \begin{equation}\label{Lag_aux4}
        |\mathcal{L}_{n}(2\pi(n+1))| \le \frac{s}{\sqrt{2 \pi (n+1)}} \ |\mathcal{L}_n(s^2)| \ e^{\pi(n+1) - s^2/2} \ e^{-R(s^2, 2 \pi (n+1))/2}, 
    \end{equation}
    where
    \begin{equation}
        R(s^2, 2 \pi (n+1)) = \int\limits_{s^2}^{2\pi(n+1)} \frac{\sqrt{(t-q^2)(t-s^2)}}{t} \, dt.
    \end{equation}
    For brevity, we set $s^2=A(n+1)$, where $A=\frac{1}{n+1}(\sqrt{n+1}+\sqrt{n})^2 \leq 4$. Then
    \begin{align}
        R(s^2, 2 \pi (n+1))
        & = \int_{A(n+1)}^{2\pi(n+1)} \sqrt{1-\frac{q^2}{t}} \sqrt{1-\frac{s^2}{t}} \, dt\\
        & \geq \sqrt{1- \frac{q^2}{s^2}} \, \int_{A(n+1)}^{2\pi(n+1)} \sqrt{1-\frac{s^2}{t}} \, dt.
    \end{align}
    The function $t \mapsto \sqrt{1-\frac{s^2}{t}}$ is increasing for $t \geq s^2$. Since $\sqrt{2\pi A} \in (A,2\pi)$, we have
    \begin{align}
        \int_{A(n+1)}^{2\pi(n+1)} \sqrt{1-\frac{s^2}{t}} \, dt
        & \geq \int_{\sqrt{2\pi A} \, (n+1)}^{2\pi (n+1)} \sqrt{1-\frac{s^2}{\sqrt{2 \pi A} \, (n+1)}} \, dt\\
        & = \sqrt{1-\frac{\sqrt{A}}{\sqrt{2\pi}}} \, (2\pi - \sqrt{2\pi A}) (n+1).
    \end{align}
    Since $A \leq 4$, we have
    \begin{equation}\label{eq:est_A}
        \sqrt{1-\frac{\sqrt{A}}{\sqrt{2\pi}}} \, (2\pi - \sqrt{2\pi A}) \geq 2 \sqrt{1-\frac{\sqrt{2}}{\sqrt{\pi}}} \, (\pi - \sqrt{2\pi}) > 0.57.
    \end{equation}
    As $q^2 s^2=1$ and $s^2$ is at least $\sqrt{2}+1$, it holds that
    \begin{equation}\label{eq:est_s4}
        \sqrt{1-\frac{q^2}{s^2}} = \sqrt{1-\frac{1}{s^4}} \geq \sqrt{1-\left(\frac{1}{\sqrt{2}+1}\right)^4} \geq 0.985.
    \end{equation}
    Combining \eqref{eq:est_A} and \eqref{eq:est_s4}, we conclude
    \begin{equation}\label{R_est}
        R(s^2,2\pi(n+1)) \geq 0.985 \cdot 0.57 (n+1) > 0.56 (n+1), \quad \forall n \in \N.
    \end{equation}
    Now, we return to relation~\eqref{Lag_aux4}. Using the classical Szeg\H{o} estimate~\eqref{eq:Szego}, and estimate \eqref{R_est}, we obtain our desired inequality:
    \begin{equation}\label{Lag_aux5}
        |\mathcal{L}_{n}(2\pi(n+1))| \le  e^{\pi(n+1)} \frac{\sqrt{4(n+1)}}{\sqrt{2 \pi (n+1)}} e^{-0.28(n+1)} \le e^{\pi(n+1)} \frac{\sqrt{2}}{\sqrt{\pi}} \, e^{-0.28(n+1)}, \quad \forall n \in \N.
    \end{equation}
\end{proof}

\textbf{Proof strategy.} In Section \ref{sec:results}, we will use the Janssen test \eqref{eq:Janssen_test} to show that certain Gabor systems with Hermite functions over a square lattice form a frame. By Lemma \ref{lem:Vgg} and \eqref{eq:Vh_L} this leads to the verification that
\begin{equation}\label{eq:explicit_square_janssen}
    \sum_{(k,l) \in \Z^2} |\mathcal{L}_n(\pi \delta (k^2+l^2))| e^{-\pi \delta (k^2+l^2)/2} < 2,
\end{equation}
where $\delta$ is the density of the square lattice. The rough strategy is always to split the series for $|k|,|l|$ relatively small and the rest. After all, we have a Gaussian weight which guarantees that the contributions for $|k|,|l|$ large will be tiny. For small orders $n$, the bound of the terms with a small index will be achieved by a finite number of computations, i.e., explicitly computing values of a finite sum, while for large orders $n$ we employ the bounds based on Lemma~\ref{lem:Lag_est}. The estimates on the remainder (large index) come from the auxiliary results in this section. The splitting will either involve squares, containing a finite number of points, or computing values for layers close to the origin, as shown in Fig.~\ref{fig:strategy}.

\begin{figure}[ht]
    \includegraphics[width=.475\textwidth]{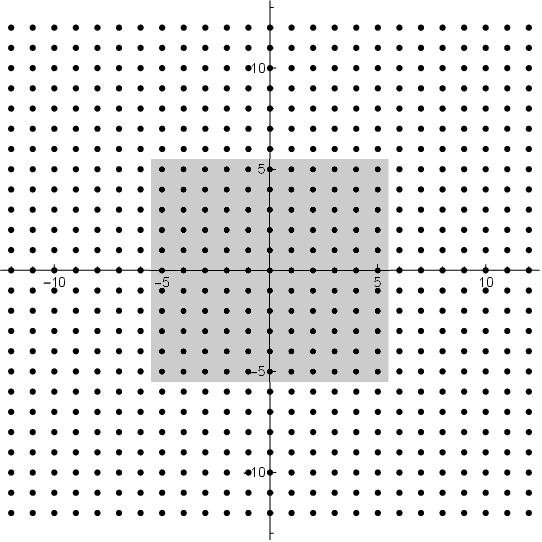}
    \hfill
    \includegraphics[width=.475\textwidth]{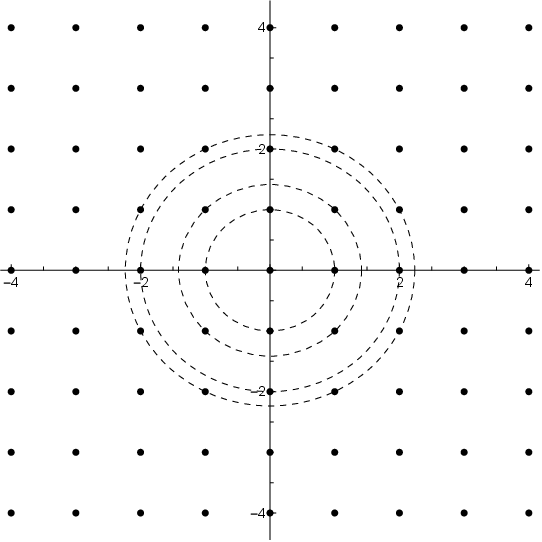}
    \caption{Left: we split $\Z^2$ into a finite section (gray area), where we perform a finite number of computations, and a complementary region, where the series can be bounded by $0 < \varepsilon \ll 1$ sufficiently small. Right: Instead of splitting $\Z^2$ by means of the max-norm, we may use the Euclidean distance and split $\Z^2$ into layers $\ell_m(\Z^2) = \{(k,l) \in \Z^2 \mid k^2+l^2 = m\}$. This also shows the connection to decomposing an integer into a sum of two squares.}
    \label{fig:strategy}
\end{figure}

A layer of a lattice $\L$ is a set of points satisfying
\begin{equation}
    \ell_r(\L) = \{ \l \in \L \mid |\l|^2 = r \}.
\end{equation}
We use the notation $|\ell_r(\L)|$ to denote the number of points in $\ell_r(\L)$. For most $r$ the set will, of course, be empty. The set with the smallest index $r > 0$ which is not empty is called the first layer (it contains the closest neighbors of the origin), the non-empty set with the second smallest index is the second layer, and so on. The decomposition of the square lattice $\Z^2$ into layers naturally leads to the question of how to write an integer as the sum of two squares. Denote by $r_2(m)$ the number of ways of writing $m \in \N_0$ as the sum of two squares, allowing zeros and distinguishing signs and order. For example,
\begin{align}
    r_2(0) & = 1 \quad \text{ as } 0 = 0^2+0^2,\\
    r_2(1) & = 4 \quad \text{ as } 1 = 0^2+1^2 = 0^2+(-1)^2 = 1^2+0^2 = (-1)^2+0^2,\\
    r_2(2) & = 4 \quad \text{ as } 2 = 1^2+1^2 = 1^2+(-1)^2 = (-1)^2+1^2 = (-1)^2+(-1)^2,\\
    r_2(3) & = 0.
\end{align}
Generally, $r_2(m) \neq 0$ if and only if all primes of the form $4k+3$ in the prime factorization of $m$ occur with an even exponent (cf.~\cite[Chap.~10, \S~3.1]{SteSha_Complex_03}). Note that, for $m \in \N_0$, $|\ell_m(\Z^2)| = r_2(m)$ and that we have the trivial estimate
\begin{equation}\label{eq:r2}
    r_2(m) \leq 4\sqrt{m} \leq 4 m, \quad m \in \N,
\end{equation}
as we have at most $\sqrt{m}$ possible choices for the pair of summands and 4 choices to place signs. This bound is quite rough, but sufficient for our purposes.

The last ingredient we need is the following estimate. We are grateful to the anonymous referee for the next lemma and its proof. This allowed us to slightly improve some of our original subsequent estimates.

\begin{lemma}\label{lem:Laguerre_Gamma_est}
    Let $a\geq 2$, $a^2\in\N$, $n\geq 1$, and $\gamma > \frac{2n+2}{a^2 \pi}$. Then 
    \begin{equation}\label{eq:lem_Laugerre_est}
        \sum\limits_{\substack{(k,l) \in \Z^2,\\
        k^2+ l^2 \geq a^2 }} (k^2+l^2)^n e^{-\pi \gamma(k^2+l^2)/2} \le \frac{4 a^{2(n+1)} e^{-\pi\gamma a^2/2}}{1-c},
    \end{equation}
    where $c = e^{-\pi\gamma/2+ (n+1)/a^2}\in(0,1)$.
\end{lemma}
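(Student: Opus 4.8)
The plan is to treat the summand as a radial function of $u=k^2+l^2$ and to split the estimate into a ``monotone envelope'' step and a Gaussian-tail step. Write $c=\tfrac{\pi\gamma}{2}$ and $g(u)=u^n e^{-cu}$, so the left-hand side is $\sum_{k^2+l^2\ge a^2} g(k^2+l^2)$. The hypothesis $\gamma>\tfrac{2n+2}{a^2\pi}$ is precisely $ca^2>n+1$; since $g$ attains its maximum at $u=n/c<a^2$, this guarantees that $g$ is strictly decreasing on $[a^2,\infty)$, i.e.\ on the entire range of summation. This monotonicity is what will make a comparison with an integral legitimate, so I would record it first.

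The key reduction is a pointwise exponential envelope. Set $\beta=c-\tfrac{n}{a^2}>0$. The auxiliary function $u^n e^{-(n/a^2)u}$ has derivative $n\,u^{n-1}e^{-(n/a^2)u}(1-u/a^2)$, hence is maximized at $u=a^2$ with value $a^{2n}e^{-n}$; therefore $g(u)=\bigl(u^n e^{-(n/a^2)u}\bigr)e^{-\beta u}\le a^{2n}e^{-n}e^{-\beta u}$ for all $u>0$. This gives
\begin{equation}
\sum_{k^2+l^2\ge a^2} g(k^2+l^2)\le a^{2n}e^{-n}\sum_{k^2+l^2\ge a^2} e^{-\beta(k^2+l^2)}.
\end{equation}
The leftover factor $e^{-n}$ is tailored to cancel the exponent produced by the Gaussian tail: if one establishes $\sum_{k^2+l^2\ge a^2}e^{-\beta(k^2+l^2)}\le C\,e^{-\beta a^2}$, then, using $\beta a^2=ca^2-n$, the total collapses to $C\,a^{2n}e^{-n}e^{-\beta a^2}=C\,a^{2n}e^{-ca^2}$, and it only remains to check $C\le \tfrac{2\pi}{c}=\tfrac{4}{\gamma}$.

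Thus the heart of the matter is the radial Gaussian tail $T=\sum_{k^2+l^2\ge a^2}e^{-\beta(k^2+l^2)}$, which I would control by comparison with $\int_{|x|\ge a}e^{-\beta|x|^2}\,dx=\tfrac{\pi}{\beta}e^{-\beta a^2}$. Because the integrand is radially decreasing, each lattice point $(k,l)$ with $k^2+l^2\ge a^2$ can be dominated by the integral of $e^{-\beta|x|^2}$ over a unit cell lying no farther from the origin than $(k,l)$; carrying this out quadrant by quadrant (with the coordinate axes treated separately) turns $T$ into an integral over a region of the form $\{|x|\ge a-\delta\}$ with $\delta=O(1)$, and the assumption $a\ge 2$ is what keeps the induced boundary-layer correction a bounded multiplicative factor rather than an exponential one. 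Reassembling and simplifying $e^{-\beta a^2+n-n}$ would then recover the claimed bound $\tfrac{4}{\gamma}a^{2n}e^{-a^2\pi\gamma/2}$.

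I expect the sum-versus-integral comparison, together with pinning the final constant, to be the main obstacle. The delicate point is the innermost shell of lattice points near radius $a$: for a radial function the lattice sum can genuinely exceed the corresponding integral, so the covering by inner cells must be arranged so that the radius shift $\delta$ and the overlaps along the axes are absorbed into the constant and not into the exponent. In particular the reduction from the integral value $\tfrac{\pi}{\beta}$ to the target $\tfrac{2\pi}{c}=\tfrac{4}{\gamma}$ amounts to an inequality of the shape $c\le 2\beta$ (equivalently a lower bound on $ca^2$), and it is exactly here that the hypotheses $a\ge 2$ and $ca^2>n+1$ must be exploited sharply. Everything surrounding this—the envelope estimate, the monotonicity, and the polar evaluation of the Gaussian integral—is routine by comparison.
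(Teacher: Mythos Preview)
Your approach differs from the paper's: the paper majorizes the sum directly by the polar integral $2\pi\int_a^\infty r^{2n+1}e^{-\pi\gamma r^2/2}\,dr$, rewrites this as $\pi\,(2/(\pi\gamma))^{n+1}\Gamma(n+1,a^2\pi\gamma/2)$, and then invokes a cited bound for the incomplete Gamma function. Your route---pass to the exponential envelope $u^n e^{-cu}\le a^{2n}e^{-n}e^{-\beta u}$ and then estimate a pure Gaussian tail---is a legitimate alternative that avoids the external citation; both arguments share the same sum-versus-integral comparison that you correctly identify as delicate (and which the paper also asserts without detailed justification).

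There is, however, a concrete gap in your constant analysis. You say the final reduction from $\pi/\beta$ to $2\pi/c=4/\gamma$ ``amounts to an inequality of the shape $c\le 2\beta$'' and that ``it is exactly here that the hypotheses $a\ge 2$ and $ca^2>n+1$ must be exploited sharply.'' But $c\le 2\beta$ unwinds to $ca^2\ge 2n$, which for every $n\ge 2$ is strictly stronger than the stated hypothesis $ca^2>n+1$; the assumption $a\ge 2$ cannot rescue this, since $a$ enters the inequality only through the product $ca^2$. So under the lemma's hypotheses as written, your outline cannot deliver the constant $4/\gamma$. (To be fair, the paper's own invocation of the incomplete-Gamma bound with $B=2$ requires $x>2(s-1)$, i.e.\ $a^2\pi\gamma>4n$, which is the very same stronger condition; in every application of the lemma in the paper this stronger condition holds with a large margin, so the discrepancy is harmless there.) If you want the envelope method to close, you should either strengthen the hypothesis to $\gamma>4n/(a^2\pi)$, or sharpen the envelope so that the resulting $\beta$ sits closer to $c$; the present sketch does neither, and the sum-to-integral step is also left as an intention rather than an argument.
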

\begin{proof}
    By the assumption on $\gamma$, we have $-\pi\gamma/2+(n+1)/a^2<0$, whence $c\in(0,1)$. 

    We proceed with \eqref{eq:lem_Laugerre_est}. Since the sum of squares of integers is an integer, \eqref{eq:r2} implies
    \begin{equation}\label{eq:new_lemma1}
        \begin{split}
            \sum\limits_{\substack{(k,l) \in \Z^2,\\
        k^2+ l^2 \geq a^2 }} (k^2+l^2)^n e^{-\pi \gamma(k^2+l^2)/2} 
        = \sum\limits_{m=a^2}^\infty r_2(m) m^n e^{-\pi \gamma m/2} 
        \leq  4 \sum\limits_{m=a^2}^\infty m^{n+1} e^{-\pi \gamma m/2}  .
        \end{split}
    \end{equation}
    For all $m\geq a^2$ holds
    \begin{equation}\label{eq:new_lemma2}
        \begin{split}
            \frac{(m+1)^{n+1} e^{-\pi\gamma (m+1)/2}}{ m^{n+1} e^{-\pi\gamma m/2}} 
            & = \left(1+\frac{1}{m}\right)^{n+1} e^{-\pi\gamma/2} 
            \leq \left(1+\frac{1}{a^2}\right)^{n+1} e^{-\pi\gamma/2} \\
            & \leq e^{-\pi\gamma/2+(n+1)/a^2}  = c.
        \end{split}
    \end{equation}
    We now iterate \eqref{eq:new_lemma2} $m-a^2$ times to obtain
    \begin{equation}
     \begin{split}
         m^{n+1} e^{-\pi\gamma m/2} &\leq c
        (m-1)^{n+1} e^{-\pi\gamma (m-1)/2}  \\
        & \ \vdots \\
        & \leq c^{m-a^2} (m-(m-a^2))^{n+1} e^{-\pi\gamma(m-(m-a^2))/2} \\
        & = c^{m-a^2} a^{2(n+1)} e^{-\pi\gamma a^2/2}.
     \end{split}
    \end{equation}
    Thus, \eqref{eq:new_lemma1} can be further estimated as
    \begin{equation}
        \begin{split}
            &\phantom{=}\sum\limits_{\substack{(k,l) \in \Z^2,\\
        k^2+ l^2 \geq a^2 }} (k^2+l^2)^n e^{-\pi \gamma(k^2+l^2)/2}  
        \leq 4 \sum\limits_{m=a^2}^\infty c^{m-a^2} a^{2(n+1)} e^{-\pi\gamma a^2/2} \\
        &  =  4 a^{2(n+1)} e^{-\pi\gamma a^2/2} \sum\limits_{m=0}^\infty c^{m}  = \frac{4 a^{2(n+1)} e^{-\pi\gamma a^2/2} }{1-c}
        \end{split}
    \end{equation}
    as claimed.
\end{proof}

\section{Results}\label{sec:results}
\subsection{Proof of Theorem \texorpdfstring{\ref{thm:main}}{1.1}}\label{sec:proof}
We will now prove Theorem \ref{thm:main} by settling the cases for $4 \leq n \leq 32$ first, which we solve by a finite number of computations and showing that the cut-off remainders cannot contribute enough mass such that the finite sums would violate the Janssen test in Lemma \ref{lem:Janssen_test}. The case $n \geq 33$ will be proved separately by providing uniform bounds (in $n$) on the terms in the first and second layer of the adjoint lattice and bounding the remainder.

\begin{proposition}\label{pro:4-36}
    Let $n \in \N$ with $4 \leq n \leq 32$. Then the Gabor system
    \begin{equation}
        \mathcal{G}_n = \mathcal{G} \left(h_n,\frac{1}{\sqrt{n+1}} \, \Z^2 \right)
    \end{equation}
    constitutes a frame for $L^2(\R)$.
\end{proposition}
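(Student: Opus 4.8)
The plan is to verify the Janssen test (Lemma~\ref{lem:Janssen_test}) for each order $n$ in the range $4 \leq n \leq 36$, i.e.\ to show that
\begin{equation}
    \Sigma_n := \sum_{(k,l) \in \Z^2} \left| \mathcal{L}_n\!\left( \pi \delta (k^2+l^2) \right) \right| e^{-\frac{\pi}{2} \delta (k^2+l^2)} < 2,
\end{equation}
where $\delta = n+1$ is the density of the square lattice (so the adjoint lattice $\L^\circ = \sqrt{n+1}\,\Z^2$ has $|\l^\circ|^2 = (n+1)(k^2+l^2)$). Since this is a single numerical inequality for each of $33$ fixed values of $n$, the proof will be a finite computation made rigorous by controlling the tail. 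First I would split $\Z^2$ into a finite ``core'' $\{(k,l) : k^2+l^2 < a^2\}$ and the complementary annulus $\{k^2+l^2 \geq a^2\}$, choosing the threshold $a$ (for each $n$) large enough that $a^2 > x_n/(\pi(n+1))$, so that on the tail the argument $\pi(n+1)(k^2+l^2)$ exceeds the largest root $x_n$ of $\mathcal{L}_n$ and the crude monotone bound~\eqref{eq:bound_xn} applies.

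For the core, the plan is to evaluate the finite sum explicitly: for each $(k,l)$ with $k^2+l^2 < a^2$ one computes $\mathcal{L}_n(\pi(n+1)(k^2+l^2))$ exactly (these are integer linear combinations of powers, hence exact rational arithmetic) and multiplies by the Gaussian weight. Grouping by layers $\ell_m$ using the counts $r_2(m)$ from Sec.~\ref{sec:aux} reduces the number of distinct evaluations: the core sum becomes $\sum_{0 < m < a^2} r_2(m)\,|\mathcal{L}_n(\pi(n+1)m)|\,e^{-\frac{\pi}{2}(n+1)m}$, plus the $m=0$ term which contributes $\mathcal{L}_n(0) = 1$. The task is then to certify that this finite sum is bounded away from $2$ with enough slack to absorb the tail.

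For the tail I would invoke Lemma~\ref{lem:Laguerre_Gamma_est}. Applying the crude polynomial bound~\eqref{eq:bound_xn}, on the annulus we have
\begin{equation}
    \left| \mathcal{L}_n\!\left(\pi(n+1)(k^2+l^2)\right)\right| \leq (n+1)\binom{n}{\lfloor n/2 \rfloor} \left(\pi(n+1)\right)^n (k^2+l^2)^n,
\end{equation}
valid as soon as $\pi(n+1)(k^2+l^2) \geq 1$. Substituting this into the tail and setting $\gamma = n+1$ in Lemma~\ref{lem:Laguerre_Gamma_est} (whose hypothesis $\gamma > (2n+2)/(a^2\pi)$ is exactly the root condition I imposed on $a$) bounds the tail by
\begin{equation}
    (n+1)\binom{n}{\lfloor n/2 \rfloor}\left(\pi(n+1)\right)^n \cdot \frac{4}{n+1}\, a^{2n} e^{-\frac{a^2 \pi (n+1)}{2}},
\end{equation}
which decays superexponentially in $a$ and can be driven below any prescribed $\varepsilon$ by taking $a$ slightly larger. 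So for each $n$ one picks $a$ so that the tail is, say, below $0.01$, and then checks the finite core sum is below $1.99$.

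The main obstacle I expect is not conceptual but quantitative bookkeeping: the core sums must genuinely come out below $2$, and since the Gröchenig--Lyubarskii density $n+1$ is exactly the critical value here, there is no a priori guarantee the Janssen test succeeds---indeed it will fail for $n \leq 3$, which is consistent with those systems not being frames. Thus the delicate point is that the slack is real but possibly small, so $a$ must be chosen carefully: large enough that the root condition holds and the tail is negligible, yet the $m=1,2$ near-origin layers (where $\mathcal{L}_n$ can be large in absolute value before the Gaussian suppression kicks in) do not push the core over $2$. The binomial prefactor $\binom{n}{\lfloor n/2\rfloor}$ is enormous for $n$ near $36$, so one must confirm that the Gaussian factor $e^{-\frac{a^2\pi(n+1)}{2}}$ in the tail bound still dominates; this is what forces the restriction $n \leq 36$ and is the reason the complementary range $n \geq 36$ is handled separately by the sharper Krasikov-based estimates of Lemma~\ref{lem:Lag_est} rather than by~\eqref{eq:bound_xn}. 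Since the computation is finite and each ingredient is a rigorous inequality, assembling core bound plus tail bound below $2$ completes the proof.
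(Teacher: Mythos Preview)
Your plan is essentially the paper's proof: verify the Janssen test by splitting into a finite core and a tail, bound the tail via the crude estimate~\eqref{eq:bound_xn} together with Gaussian decay, and check the core by explicit finite computation. The only cosmetic differences are that the paper splits by the max-norm box $|k|,|l|\le 5$ and handles the tail with the truncated geometric series device, whereas you split by the Euclidean layer $k^2+l^2\ge a^2$ and invoke Lemma~\ref{lem:Laguerre_Gamma_est}; both options are already laid out in Sec.~\ref{sec:aux}. One quantitative caveat: your illustrative tolerance $\varepsilon=0.01$ is too coarse at the bottom of the range, since for $n=4$ the core sum is approximately $1.994$ (see the paper's table), so you must in fact take the tail below roughly $10^{-3}$ for those small $n$; this costs nothing in the argument but should be stated correctly.
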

\begin{proof}
We use the Janssen test to show that $\mathcal{G}_n$ is a frame for $n \in \N$ with $4 \leq n \leq 32$. So, we have to show that
\begin{equation}
    \sum_{(k,l) \in \Z^2} \left|\mathcal{L}_n(\pi (n+1)(k^2+l^2))\right| e^{-\pi (n+1)(k^2+l^2)/2} < 2.
\end{equation}
As $h_n$ is in $M^1(\R)$, $n \in \N_0$, \eqref{eq:cond_A} holds and the series converges unconditionally. So, we may split the series in two parts: a finite part consisting of lattice points with $|k|,|l| \leq 5$ and a remainder with $\max\{|k|,|l|\} \geq 6$.

By \eqref{eq:bound_xn}, \eqref{eq:r2}, we bound the remainder in a generous way, as we expect it to be small:
\begin{align}
    & \ \sum_{\max\{|k|,|l|\} \geq 6} \left|\mathcal{L}_n(\pi (n+1)(k^2+l^2))\right| e^{-\pi (n+1)(k^2+l^2)/2}\\
    \leq & \ (n+1) \binom{n}{\lfloor n/2 \rfloor} \sum_{\max\{|k|,|l|\} \geq 6} (\pi (n+1)(k^2+l^2))^n e^{-\pi(n+1)(k^2+l^2)/2}\\
    \leq & \ \pi^n(n+1)^{n+1} \binom{n}{\lfloor n/2 \rfloor} \sum_{k^2+l^2 \geq 36} (k^2+l^2)^n e^{-\pi(n+1)(k^2+l^2)/2}.
\end{align}
We first estimate the constant in front of the series, which is clearly increasing in $n$:
\begin{equation}
    \pi^n (n+1)^{n+1} \binom{n}{\lfloor n/2 \rfloor} \leq  \pi^{32} \cdot 33^{33} \cdot \binom{32}{16} <  10^{75},
    \quad n \leq 32.
\end{equation}
To estimate the series, we apply Lemma \ref{lem:Laguerre_Gamma_est} with $a=6$ and $\gamma = n+1$. The condition $n+1>\frac{2n+2}{6^2 \pi}$ is satisfied, and we obtain
\begin{equation}
    \sum_{k^2+l^2 \geq 36} (k^2+l^2)^n e^{-\pi (n+1)(k^2+l^2)/2} 
    \leq \frac{4\cdot 36^{n+1} e^{-18\pi(n+1)}}{1-c} 
    =  \frac{4 e^{(2\ln 6-18\pi)(n+1)}}{1-c} 
\end{equation}
with $c=  e^{-\pi (n+1)/2 +(n+1)/36}= e^{-(18\pi-1)(n+1)/36}$.
Since $c$ is monotonically decreasing in $n$, the ratio $\frac{1}{1-c}$ is also monotonically decreasing in $n$. Furthermore, $2 \ln 6-18\pi<-50$, thus we can estimate from above by evaluating the expression for $n=4$:
\begin{equation}
     \sum_{k^2+l^2 \geq 36} (k^2+l^2)^n e^{-\pi (n+1)(k^2+l^2)/2} 
    \leq \frac{4 e^{5 (2\ln 6-18\pi)}}{1- e^{-5(18\pi-1)/36}} <4\cdot 10^{-115}.
\end{equation}

Thus, for $4 \leq n \leq 33$, the remainder can be estimated by
\begin{equation}
    \sum_{\max\{|k|,|l|\} \geq 6} \left| \mathcal{L}_n(\pi(n+1)(k^2+l^2)) \right| e^{-\pi(n+1)(k^2+l^2)/2} < 4\cdot 10^{-115+75} =4\cdot  10^{-40}.
\end{equation}

Now, we are left with a finite number of computations. Using \textit{Mathematica} \cite{mathematica}, we compute the values of the finite part to 5 significant digits after the decimal point (always rounded up) to see that indeed we have
\begin{equation}\label{eq:finite_n<=36}
    \sum_{|k|,|l| \leq 5} |\mathcal{L}_n(\pi(n+1)(k^2+l^2))| \ e^{-\pi (n+1)(k^2+l^2)/2} < 1.99999 = 2 - 10^{-5}, \quad 4 \leq n \leq 32 .
\end{equation}
These calculations could be carried out to the necessary precision by hand. The precise values of \eqref{eq:finite_n<=36} for $n \in \{0, \ldots , 32\}$ are given in Fig.~\ref{fig:table_Janssen_test}. For $4 \leq n \leq 32$ they are all significantly (at least $10^{-3}$) smaller than 2. With our estimates on the remainder, this gives a complete proof of the frame property for the Gabor systems $\mathcal{G}_n$ and $4 \leq n \leq 32$.
\end{proof}

\begin{figure}[h!t]
\begin{tabular}{|r||r|r|r|r|r|r|}
    \hline
    $n$     & 0 & 1 & 2 & 3 & 4 & 5 \\
    \hline
    value   & 2.01497   & 2\phantom{.00000} & 2.00003  & 2\phantom{.00000}  & 1.99390   & 1.97889   \\
    \hline
    \hline
    $n$    & 6 & 7& 8 & 9 & 10 & 11  \\
    \hline
    value & 1.95381   & 1.91844  & 1.87308   & 1.81835           & 1.75515   & 1.68451     \\
    \hline
    \hline
    $n$    & 12 & 13 & 14 & 15 & 16 & 17 \\
    \hline
    value  & 1.60760   & 1.52567   & 1.44006 & 1.35211 & 1.26320   & 1.17470   \\
    \hline
    \hline
    $n$     & 18 & 19 & 20 & 21 & 22 & 23\\
    \hline
    value   & 1.08793   & 1.00419           & 1.07535   & 1.14951   & 1.21732   & 1.27788\\
    \hline
    \hline
    $n$     & 24 & 25 & 26 & 27 & 28 & 29 \\
    \hline
    value   & 1.33044   & 1.37440           & 1.40932   & 1.43490           & 1.45101   & 1.45770  \\
    \hline
    \hline
    $n$    & 30 & 31 & 32 &  &  & \\
    \hline
    value   & 1.45517   & 1.44376 & 1.42396   &          & & \\
    \hline
\end{tabular}
\caption{Values of the finite computations part of the Janssen test quantity for the Gabor systems $\mathcal{G}(h_n, (1/\sqrt{n+1}) \Z^2)$.
The values for $n=1$ and $n=3$ are exact and the other values are given to 5 digits after the decimal point (rounded up).}\label{fig:table_Janssen_test}
\end{figure}
As the values in Fig.~\ref{fig:table_Janssen_test} did not show a monotonic behavior, we carried out further numerical investigations for $n$ up to 120. The behavior is curious and visualized in Fig.~\ref{fig:Janssen_test}.
\begin{figure}[h!t]
    \centering
    \includegraphics[width=.65\textwidth]{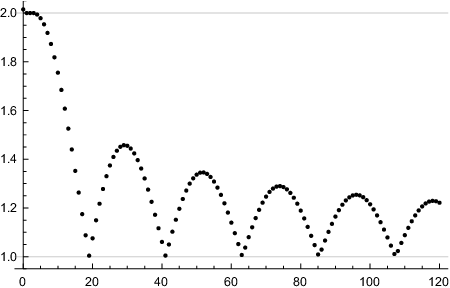}
    \caption{Values of the Janssen test quantity for $\mathcal{G}(h_n, 1/\sqrt{(n+1)} \ \Z^2)$ and $n \leq 120$.}
    \label{fig:Janssen_test}
\end{figure}

We will now treat the case $n \geq 33$. The proof, again, naturally splits into bounding the mass of the resulting series on a finite set, consisting of the origin and the first two layers of the adjoint lattice, and a small remainder. Our bounds are not the best possible and there is room to improve on $n$, i.e., make it smaller, but in any case we would be left with finitely many cases which need to be checked by hand (or computer), unless we take more layers into account for the finite set.
\begin{proposition}\label{prop_large_n}
    Let $n \geq 33$, then the Gabor system $\mathcal{G}(h_n, \frac{1}{\sqrt{n+1}} \Z \times \frac{1}{\sqrt{n+1}} \Z)$ is a frame.
\end{proposition}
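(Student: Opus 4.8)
The plan is to invoke the Janssen test (Lemma~\ref{lem:Janssen_test}) once more. Using \eqref{eq:Vh_L} and the fact that the adjoint of $\tfrac{1}{\sqrt{n+1}}\Z\times\tfrac{1}{\sqrt{n+1}}\Z$ is $\sqrt{n+1}\,\Z^2$, everything reduces to verifying that
\begin{equation}
    \Sigma_n := \sum_{(k,l)\in\Z^2} \left|\mathcal{L}_n\big(\pi(n+1)(k^2+l^2)\big)\right| e^{-\frac{\pi}{2}(n+1)(k^2+l^2)}
\end{equation}
satisfies $\Sigma_n<2$. In contrast to the regime $4\le n\le 36$, where a finite block $|k|,|l|\le 5$ is evaluated numerically, for $n\ge 36$ I would keep only the \emph{origin} together with the first two nonempty layers $k^2+l^2\in\{1,2\}$ as the finite part, and treat everything with $k^2+l^2\ge 4$ (note $r_2(3)=0$) as the remainder. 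The origin contributes exactly $\mathcal{L}_n(0)=1$, and each layer carries $r_2(1)=r_2(2)=4$ points, so the finite part equals $1 + 4|\mathcal{L}_n(\pi(n+1))|e^{-\frac{\pi}{2}(n+1)} + 4|\mathcal{L}_n(2\pi(n+1))|e^{-\pi(n+1)}$.

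For the two layers I would feed in the Krasikov-type bounds of Lemma~\ref{lem:Lag_est}. Estimate \eqref{Lag_est2} collapses the first layer to $4|\mathcal{L}_n(\pi(n+1))|e^{-\frac{\pi}{2}(n+1)}\le 4\sqrt{2/n}$, and \eqref{Lag_est3} collapses the second to $4|\mathcal{L}_n(2\pi(n+1))|e^{-\pi(n+1)}\le 4\sqrt{2/\pi}\,e^{-0.1n-0.86}$; both hold for $n\ge 11$, hence for all $n\ge 36$. It is worth stressing that the crude Szeg\H{o} bound \eqref{eq:Szego} would only give $4$ for the first layer, which is useless; the extra factor $\sqrt{2/n}$ from \eqref{Lag_est2} is exactly what pushes the first layer just below $1$ at $n=36$.

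The delicate step is the remainder $\sum_{k^2+l^2\ge 4}$, and here I would \emph{not} use the polynomial bound \eqref{eq:bound_xn} directly: inserting $|\mathcal{L}_n(x)|\le (n+1)\binom{n}{\lfloor n/2\rfloor}x^n$ with $x=\pi(n+1)(k^2+l^2)$ produces a factor $(n+1)^n$ that the Gaussian weight cannot absorb for large $n$. Instead I would exploit the multiplicativity of the coefficient bound: since $|\mathcal{L}_n(y)|\le \mathcal{L}_n(-y)=\sum_{k=0}^n\binom{n}{k}\frac{y^k}{k!}$ for $y>0$, one gets for $r=k^2+l^2\ge 1$ the scaling inequality $|\mathcal{L}_n(\pi(n+1)r)|\le r^n\,\mathcal{L}_n(-\pi(n+1))$, and now \eqref{Lag_est1} bounds $\mathcal{L}_n(-\pi(n+1))$ by the merely exponential $e^{(\pi+1)(n+1)}$ rather than by $(n+1)^n$. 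Applying Lemma~\ref{lem:Laguerre_Gamma_est} with $a=2$ and $\gamma=n+1$ (the hypothesis $\gamma>\frac{2n+2}{a^2\pi}=\frac{n+1}{2\pi}$ is trivial) then gives
\begin{equation}
    \sum_{k^2+l^2\ge 4} (k^2+l^2)^n\, e^{-\frac{\pi}{2}(n+1)(k^2+l^2)} \le \frac{4}{n+1}\,4^{n}\,e^{-2\pi(n+1)},
\end{equation}
so the remainder is at most $\frac{4}{n+1}4^{n}e^{(1-\pi)(n+1)}$. The crucial point is that $4e^{1-\pi}<1$, whence this bound decays super-exponentially and is negligible (of order $10^{-14}$ already at $n=36$).

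Combining the pieces yields $\Sigma_n\le 1 + 4\sqrt{2/n} + 4\sqrt{2/\pi}\,e^{-0.1n-0.86} + \frac{4}{n+1}4^{n}e^{(1-\pi)(n+1)}$, and each of the three nonconstant terms is monotonically decreasing in $n$. It therefore suffices to evaluate the right-hand side at $n=36$, where it is about $1+0.943+0.037+10^{-14}<2$; this both finishes the proof and explains why $36$ is the natural crossover with Proposition~\ref{pro:4-36}. I expect the main obstacle to be exactly the remainder: the naive polynomial bound breaks down for large $n$, so the argument hinges on the scaling inequality $|\mathcal{L}_n(\pi(n+1)r)|\le r^n\mathcal{L}_n(-\pi(n+1))$ paired with the exponential estimate \eqref{Lag_est1}. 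A secondary tightness issue is the razor-thin margin at $n=36$, which is what forces the use of the sharp first-layer bound \eqref{Lag_est2} rather than any softer estimate.
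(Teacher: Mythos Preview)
Your proposal is correct and follows essentially the same route as the paper: the identical split into origin, first two layers, and the tail $k^2+l^2\ge 4$; the same use of \eqref{Lag_est2} and \eqref{Lag_est3} for the first two layers; and the same key idea for the tail, namely the scaling inequality $|\mathcal{L}_n(\pi(n+1)r)|\le r^n\mathcal{L}_n(-\pi(n+1))$ combined with \eqref{Lag_est1} and Lemma~\ref{lem:Laguerre_Gamma_est} with $a=2$, $\gamma=n+1$. The only cosmetic difference is that the paper inserts the relaxation $(\pi+1)(n+1)\le \tfrac{3\pi}{2}(n+1)$ before applying Lemma~\ref{lem:Laguerre_Gamma_est} and packages the tail as $e^{-0.184n}$, whereas you keep the tighter form $\tfrac{4}{n+1}4^{n}e^{(1-\pi)(n+1)}$; both are harmless and yield the same conclusion at $n=36$.
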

\begin{proof}
    We will use the Janssen test again and show that
    \begin{equation}\label{t_jtest}
        \sum_{\l^\circ \in \L^\circ} |V_{h_n}h_n(\l^\circ)| < 2 \quad \text{ for } n \geq 33,
    \end{equation}
    which by \eqref{eq:Vh_L}, is equivalent to showing
    \begin{equation}\label{Lag_aux0}
    \sum\limits_{(k,l) \in \Z^2}
        \left| \mathcal{L}_n\left(\pi (n+1) (k^2+l^2)\right) \right| e^{-\pi(n+1)(k^2+l^2)/2}  < 2, \quad n \geq 33.
    \end{equation}
    
    Denote by $K = \left\{(0,0), (0,\pm 1), (\pm 1, 0), (\pm 1, \pm 1)\right\}$ the set which contains the origin, its 4 closest neighbors $\ell_1=\ell_1(\Z^2)$ and its 4 second closest neighbors $\ell_2=\ell_2(\Z^2)$. We split the sum at the left-hand side of~\eqref{Lag_aux0} into the sum over $K$ and $\Z^2 \setminus K$. Since $\mathcal{L}_n(0) = 1$, it suffices to prove,
    \begin{equation}
        \sum_{\ell_1} + \sum_{\ell_2} + \sum_{\ell_{3+}} < 1,
    \end{equation}
    where the abbreviated notation means we consider the sums over the first layer $\ell_1$ and the second layer $\ell_2$ and the sum over all remaining layers of $\Z^2$, abbreviated by $\ell_{3+}$ i.e.,
    \begin{align}
        \sum_{\ell_1} & = 4 |\mathcal{L}_n(\pi (n+1))| \  e^{-\pi(n+1)/2},
        \qquad \qquad
        \sum_{\ell_2} = 4 |\mathcal{L}_n(2\pi (n+1))| \ e^{-\pi(n+1)},\\
        \sum_{\ell_{3+}} & = \sum\limits_{(k,l) \in \Z^2 \setminus{K}}
        \left| \mathcal{L}_n\left( \pi (n+1)(k^2+l^2) \right) \right| \ e^{-\pi(n+1)(k^2+l^2)/2}.
    \end{align}
    By \eqref{Lag_est2} and \eqref{Lag_est3}, we have
    \begin{equation}\label{R12}
        \sum_{\ell_1} < 4\sqrt{\frac{2}{n}}
        \quad \text{and} \quad
        \sum_{\ell_2} < 4\sqrt{\frac{2}{\pi}} \, e^{-0.28(n+1)}.
    \end{equation}
    Since the coefficients of Laguerre polynomials have alternating signs, we have
    \begin{equation}\label{Lag_aux1}
        \sum_{\ell_{3+}} \leq \sum\limits_{(k,l) \in \Z^2 \setminus{K}}
        (k^2+l^2)^n \, |\mathcal{L}_n\left( -\pi (n+1) \right)|  e^{-\pi(n+1)(k^2+l^2)/2}.
    \end{equation}
    Using \eqref{Lag_est1} and the fact that $n+1 \leq \frac{\pi (n+1)}{2}$, we arrive at
    \begin{equation}
        \begin{split}
            \sum_{\ell_{3+}}
            & \leq \sum\limits_{(k,l) \in \Z^2 \setminus{K}} (k^2+l^2)^n \, 2^n e^{\pi(n+1)-\pi(n+1)(k^2+l^2)/2} \\
            & \leq \sum\limits_{(k,l) \in \Z^2 \setminus{K}} (k^2+l^2)^n \, e^{(\pi+1)(n+1)-\pi(n+1)(k^2+l^2)/2}\\
            & <  \sum\limits_{(k,l) \in \Z^2 \setminus{K}}(k^2+l^2)^n e^{-\pi(n+1)(k^2+l^2-3)/2} \\
            & = e^{3\pi (n+1)/2} \sum\limits_{(k,l) \in \Z^2 \setminus{K}} (k^2+l^2)^n e^{-\pi(n+1)(k^2+l^2)/2}.
        \end{split}
    \end{equation}
To estimate the last series in the chain of inequalities above, we use Lemma~\ref{lem:Laguerre_Gamma_est} with $\gamma = n+1$ and $a =2$:
\begin{equation}
  \sum\limits_{(k,l) \in \Z^2 \setminus{K}} (k^2+l^2)^n e^{-\pi(n+1)(k^2+l^2)/2} 
  \leq  \frac{4\cdot 2^{2(n+1)} e^{-2\pi(n+1)}}{1-e^{-\pi(n+1)/2+(n+1)/4}} 
  =\frac{4 e^{2(n+1)\ln 2-2\pi(n+1)}}{1-e^{-(2\pi-1)(n+1)/4}}.
    \end{equation}
Therefore, for all $n\geq 33$, we obtain the estimate
\begin{equation}\label{R3}
\begin{split}
    l_{3+}(n) &
\leq e^{3\pi(n+1)/2}\, \frac{4 e^{2(n+1)\ln 2-2\pi(n+1)}}{1-e^{-(2\pi-1)(n+1)/4}} 
= \frac{4e^{( -\pi+4\ln 2)(n+1)/2}}{1-e^{-(2\pi-1)(n+1)/4}} 
\leq \frac{4e^{-0.184(n+1)}}{1-e^{-(2\pi-1)(n+1)/4}}.
\end{split}
\end{equation}
    Combining~\eqref{R12} and~\eqref{R3}, we finally arrive at
    \begin{equation}
        \sum_{\ell_1} + \sum_{\ell_2} + \sum_{\ell_{3+}} < 4\sqrt{\frac{2}{n}} + 4 \sqrt{\frac{2}{\pi}} \, e^{-0.28(n+1)} + \ \frac{4 e^{-0.184(n+1)}}{1-e^{-(2\pi-1)(n+1)/4}}.
    \end{equation}
    Clearly, the expression above is decreasing in $n$. Evaluating at $n=33$ gives
    \begin{equation}
        \sum_{\ell_1} + \sum_{\ell_2} + \sum_{\ell_{3+}} < 0.985 + 0.0003 + 0.008 = 0.9933 < 1.
    \end{equation}
    So, we see that the desired bound holds for $n\geq 33$, which finishes the proof.
\end{proof}

\subsection{Failure of the Janssen test}\label{sec:failure}
\subsubsection{The case of small \texorpdfstring{$n$}{n}}
For the Gabor systems $\mathcal{G}(h_n, \delta^{-1/2} \Z^2)$ and $n \in \{0,1,2,3\}$ we know that we do not have a frame for $\delta = n+1$. In particular, the lower frame bound vanishes and the bound in the Janssen test must yield a value of at least 2. For $\delta > n+1$ we are in the safety region and have a Gabor frame. For small $n$, in particular up to 3, the Janssen test does not shed any new light on the frame set of
\begin{equation}
    \mathcal{G}_n(\delta) = \mathcal{G}(h_n, \delta^{-1/2} \Z^2).
\end{equation}
In accordance with the previous sections, we set $\mathcal{G}_n = \mathcal{G}_n(n+1)$.

For $\mathcal{G}_0$, i.e., the Gaussian function $h_0$ and the integer lattice $\Z^2$, the Janssen test fails: 
\begin{equation}
    \sum_{(k,l) \in \Z^2} |V_{h_0} h_0(k,l)| = \sum_{(k,l) \in \Z^2} e^{-\pi(k^2+l^2)/2} > \sum_{k,l \in \{-1,0,1\}} e^{-\pi(k^2+l^2)/2} > 2.004.
\end{equation}
So, the Janssen test is inconclusive, but it is a well-known fact that the lower frame bound vanishes in this case \cite{Lyu92}, \cite{Sei92}, \cite{SeiWal92} and we do not have a frame. Similarly, for $\mathcal{G}_2$ we have
\begin{align}
    \sum_{(k,l) \in \Z^2} |V_{h_2} h_2(\sqrt{3} \, k, \sqrt{3} \, l)|
    & > \sum_{(k,l) \in \{-1,0,1\}^2} \left| \mathcal{L}_2 \left(3 \pi (k^2+l^2)\right) \right| e^{-3\pi(k^2+l^2)/2}
    > 2.00001.
\end{align}
The lower frame bound vanishes, as shown by J.~Lemvig \cite[Thm.~7]{Lem16}, and we do not have a frame with the Gabor system $\mathcal{G}_2$.

Next, we consider the Gabor systems $\mathcal{G}_n$ for $n \in \{1, 3\}$. The results of Lyubarskii and Nes show that the lower bound vanishes if $n=1$. For $n=3$ we have again a result by Lemvig \cite[Thm.~8]{Lem16} which shows that the lower frame bound is zero. We remark that Boon, Zak, and Zucker \cite{BooZakZuc83} already studied the series considered in \cite{Lem16} in the context of coherent states and provided zeros (see also \cite{BooZak78}).

It is quite remarkable that for $n$ being 1 or 3 the Janssen test yields exactly 2. In this regard, these Gabor systems are closest possible to being a frame, but they still just fail.
\begin{proposition}\label{pro:Janssen_h1-3}
    Let $n \in \{1,3\}$. Then, the following hold:
    \begin{equation}
        \sum_{(k,l) \in \Z^2} |V_{h_n} h_n(\sqrt{n+1} \, k, \sqrt{n+1} \, l)| = \sum_{(k,l) \in \Z^2} \left|\mathcal{L}_n \left((n+1)\pi (k^2+l^2)\right)\right| e^{-\pi (n+1)  (k^2+l^2)/2} = 2
    \end{equation}
    and
    \begin{equation}
        \sum_{(k,l) \in \Z^2} V_{h_n} h_n(\sqrt{n+1} \, k, \sqrt{n+1} \, l) = \sum_{(k,l) \in \Z^2} \mathcal{L}_n \left((n+1)\pi (k^2+l^2)\right) e^{-\pi (n+1) (k^2+l^2)/2} = 0.
    \end{equation}
\end{proposition}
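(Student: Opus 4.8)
The plan is to reduce both identities to the single claim that the \emph{signed} series vanishes, and then to attack that vanishing by Fourier self-duality. Write $b_m = r_2(m)\,\mathcal{L}_n\big((n+1)\pi m\big)\,e^{-\frac{(n+1)\pi}{2}m}$ for the contribution of the layer $k^2+l^2=m$. Since $(n+1)kl$ is even for $n\in\{1,3\}$, the phase $e^{-\pi i x\omega}$ appearing in \eqref{eq:Vh_L} is trivial on $\L^\circ=\sqrt{n+1}\,\Z^2$, so every summand is real and the two displayed series are exactly $\sum_m|b_m|$ and $\sum_m b_m$. My first observation is that both statements are controlled by the sign pattern of $\mathcal{L}_n\big((n+1)\pi m\big)$: for $n=1$ and $n=3$ the number $(n+1)\pi$ exceeds the largest zero of $\mathcal{L}_n$, so $\mathcal{L}_n\big((n+1)\pi m\big)<0$ for every $m\ge 1$, while $b_0=1$. (This is immediate for the linear $\mathcal{L}_1$ and the explicit cubic $\mathcal{L}_3$; note that the general bound $x_n<2n+1+\sqrt{(2n+1)^2+1/4}$ only gives $x_3<14.02$ and is too weak to beat $4\pi\approx 12.57$, so one must use the actual roots of $\mathcal{L}_3$.) Granting the signed identity, I then close via
\[
\sum_m |b_m| = b_0 - \sum_{m\ge 1} b_m = b_0 - \Big(\sum_{m\ge 0} b_m - b_0\Big) = 2b_0 - \sum_{m\ge 0} b_m = 2 - 0 = 2 ,
\]
so the entire proposition follows once $\sum_{\l^\circ\in\L^\circ}V_{h_n}h_n(\l^\circ)=0$ is established.

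To prove the signed identity I would pass to the radial Laguerre--Gaussian $G_n(z)=\mathcal{L}_n(\pi|z|^2)e^{-\frac{\pi}{2}|z|^2}$, which coincides with $V_{h_n}h_n$ on $\L^\circ$, and exploit its behaviour under the two-dimensional Fourier transform. Laguerre--Gaussian modes are eigenfunctions of the $2$D Fourier transform, and a one-line rescaling (equivalently, the identification of $\mathcal{F}_\sigma V_{h_n}h_n$ with the Wigner distribution of $h_n$) yields $\widehat{G_n}(\xi)=2(-1)^n\mathcal{L}_n(4\pi|\xi|^2)e^{-2\pi|\xi|^2}$. For $n=1$ the lattice $\L^\circ=\sqrt{2}\,\Z^2$ sits at the self-dual point $\tau=i$: applying Poisson summation and inserting $\widehat{G_1}$, the factor-$\sqrt{2}$ rescaling produced by the transform returns precisely the summand of the original series times $-1$. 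Hence the sum equals its own negative and must vanish. This is the clean case.

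The hard part is $n=3$, where $\L^\circ=2\,\Z^2$ is \emph{not} self-dual. Poisson summation now only relates the series over $2\Z^2$ to the one over $\tfrac12\Z^2$ (equivalently over $\Z^2$), and chasing the two relations produces a tautology rather than cancellation. To close this case I would expand, using $\mathcal{L}_3(4\pi m)=1-12\pi m+24\pi^2 m^2-\tfrac{32}{3}\pi^3 m^3$ and $\sum_{k,l}(k^2+l^2)^j e^{-2\pi(k^2+l^2)}=(-\pi)^{-j}\Theta^{(j)}(2)$, to write the signed sum as the fixed combination
\[
\Theta(2)+12\,\Theta'(2)+24\,\Theta''(2)+\tfrac{32}{3}\,\Theta'''(2), \qquad \Theta(x)=\Big(\sum_{k\in\Z} e^{-\pi x k^2}\Big)^2 .
\]
Its vanishing is a genuine arithmetic fact about the CM point $\tau=2i$: it reflects that this theta-derivative combination is anti-invariant under the Atkin--Lehner/Fricke symmetry fixing $2i$ (the higher-level analogue of the self-duality used for $n=1$), or equivalently that the Zak-transform symbol of the frame operator has its zero at a symmetric point. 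I expect this special-value identity to be the main obstacle; I would discharge it either through the modular transformation law for $\Theta$ and its derivatives, or by invoking the theta-series evaluations of Lemvig and of Boon, Zak and Zucker already cited above. Once it is in hand, the sign reduction of the first paragraph immediately upgrades it to the absolute-value identity, completing the proof for both $n=1$ and $n=3$.
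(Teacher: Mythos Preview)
Your reduction is the same as the paper's: both arguments observe that for $n\in\{1,3\}$ the quantity $\mathcal{L}_n((n+1)\pi m)$ is negative for all $m\ge 1$, so the absolute-value identity follows from the signed one. From there, however, the routes diverge.

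For $n=1$ your Poisson/self-duality argument is correct and genuinely nicer than what the paper does. The paper does not argue directly; it imports the Lyubarskii--Nes result that $\mathcal{G}(h_1,\tfrac{1}{\sqrt 2}\Z^2)$ fails to be a frame, so the Janssen test must give $\ge 2$, and combines this with Janssen's formula that $\min_z F_{g,\Lambda}(z)$ equals the sharp lower frame bound (here $0$), which forces the signed sum to be $\ge 0$ at $z=0$. The two inequalities then pinch the value to $0$. The paper even remarks afterwards that ``it would be nice to find a proof which does not need this fact''; your self-duality computation supplies exactly that for $n=1$.

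For $n=3$ there is a genuine gap. You correctly see that Poisson between $2\Z^2$ and $\tfrac12\Z^2$ closes up to a tautology, and your rewriting as $\Theta(2)+12\Theta'(2)+24\Theta''(2)+\tfrac{32}{3}\Theta'''(2)$ is accurate. But the vanishing of this combination is then only asserted, not proved: the Fricke/Atkin--Lehner heuristic is not made precise (note that the standard inversion $x\mapsto 1/x$ for $\Theta$ fixes $x=1$, not $x=2$, so ``anti-invariance at $2i$'' needs a careful formulation), and the fallback ``invoke Lemvig/Boon--Zak--Zucker'' is exactly the external input you were trying to avoid. The paper's argument for $n=3$ is the same pinching as for $n=1$, now using Lemvig's non-frame result as the black box; it is less elegant than your $n=1$ treatment but it is complete. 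If you are willing to cite Lemvig anyway, the paper's route is shorter: it turns the non-frame statement directly into the two needed inequalities rather than passing through a special theta value.
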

\begin{proof}
    Note that for any $n \in \N$, the term $(k,l)=(0,0)$ contributes 1. Hence, our claim is
    \begin{equation}
        \sum_{(k,l) \in \Z^2 \setminus \{(0,0)\}} \left|\mathcal{L}_n \left((n+1)\pi (k^2+l^2)\right)\right| e^{-\pi(n+1) (k^2+l^2)/2} = 1, \quad \text{ for } n = 1,3.
    \end{equation}
    By the results of Lyubarskii and Nes \cite{LyuNes13} and Lemvig \cite{Lem16} we know that  the Gabor system $\mathcal{G}(h_n, (n+1)^{-1/2} \Z^2)$ is not a frame as the lower frame bound vanishes.
    
    Now, on the one hand, we know that this implies (the Janssen test must fail) that
    \begin{equation}\label{eq:h1_3_geq_1}
        \sum_{(k,l) \in \Z^2 \setminus \{(0,0)\}} \left|\mathcal{L}_n \left((n+1)\pi (k^2+l^2)\right)\right| e^{-\pi(n+1) (k^2+l^2)/2} \geq 1.
    \end{equation}
    On the other hand, we know from the results by Janssen \cite{Jan96} (see also~\cite{Fau18}) that the minimum of the Fourier series
    \begin{equation}
        F_{g,\L}(z) = \vol(\L)^{-1}\sum_{\l^\circ \in \L^\circ} V_g g(\l^\circ) e^{2 \pi i \sigma(\l^\circ, z)}, \quad z = (x,\omega) \in \R^2,
    \end{equation}
    gives the sharp lower frame bound of the Gabor system $\mathcal{G}(g,\L)$ for $\vol(\L)^{-1} \in \N$. Thus, we have $F_{g,\L}(x,\omega) \geq 0$. Setting $\L_n = 1/\sqrt{n+1} \ \Z^2$ for the moment, this implies
    \begin{align}
        -1 \leq \vol(\L) F_{h_n, \L_n}(0,0)-1 & = \sum_{(k,l) \in \Z^2 \setminus \{(0,0)\}} V_{h_n} h_n (\sqrt{n+1} \, k, \sqrt{n+1} \, l)\\
        & = \sum_{(k,l) \in \Z^2 \setminus \{(0,0)\}} \mathcal{L}_n \left((n+1)\pi (k^2+l^2)\right) e^{-\pi(n+1) (k^2+l^2)/2}.
    \end{align}
    Note that the phase factor, imposed by \eqref{eq:Vh_L}, is one as the lattice density is $n+1$, which is even in this case. Next, we look at the Laguerre polynomials $\mathcal{L}_1(x)$ and $\mathcal{L}_3(x)$. They are
    \begin{equation}
        \mathcal{L}_1(x) = 1-x
        \quad \text{ and } \quad
        \mathcal{L}_3(x) = \frac{1}{6} \left( 6 - 18 x + 9 x^2 - x^3 \right).
    \end{equation}
    The distance of the closest lattice point to the origin is $(n+1)\pi$, so we see that $\mathcal{L}_1(x)$ and $\mathcal{L}_3(x)$ are negative when evaluated at lattice points (except for the origin). Thus, for all $(k,l) \in \Z^2 \setminus \{(0,0)\}$ we have
    \begin{equation}\label{eq:h1_3_abs_negative}
         \left|\mathcal{L}_n \left((n+1)\pi (k^2+l^2)\right)\right| e^{-\pi(n+1) (k^2+l^2)/2} =
         -\mathcal{L}_n \left((n+1)\pi (k^2+l^2)\right) e^{-\pi (n+1) (k^2+l^2)/2}.
    \end{equation}
    Hence, relation \eqref{eq:h1_3_geq_1} can equivalently be written as
    \begin{equation}
        \sum_{(k,l) \in \Z^2 \setminus \{(0,0)\}} \mathcal{L}_n \left((n+1)\pi (k^2+l^2)\right) e^{-\pi(n+1) (k^2+l^2)/2} \leq -1
    \end{equation}
    which ultimately gives the equality
    \begin{equation}
        \sum_{(k,l) \in \Z^2 \setminus \{(0,0)\}} \mathcal{L}_n \left((n+1)\pi (k^2+l^2)\right) e^{-\pi (n+1) (k^2+l^2)/2} = -1.
    \end{equation}
    The claims follow by adding the value at the origin, which contributes 1, to the sum.
\end{proof}
For the proof of this result, we made use of the fact that the considered systems do not form frames. It would be nice to find a proof which does not need this fact.

As $h_1$ remains the only case where the frame set conjecture is still open (no further restrictions than the ones from \cite{LyuNes13} are known), we considered it worthwhile to apply the Janssen test in this case. The result is that we do not get any new insights.

\begin{lemma}\label{lem:Janssen_h1}
    For $\mathcal{G}(h_1,\delta^{-1/2} \Z^2)$, $\delta \geq 1$, the Janssen test quantity decreases, i.e.,
    \begin{equation}
        \delta \mapsto \sum_{\{k,l\} \in \Z^2} |\mathcal{L}_1(\pi \delta (k^2+l^2))| e^{-\pi \delta (k^2+l^2)/2}
    \end{equation}
    is a decreasing function in $\delta$.
\end{lemma}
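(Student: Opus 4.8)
The plan is to exploit the explicit formula $\mathcal{L}_1(x) = 1-x$ and reduce the assertion to a termwise monotonicity statement. First I would observe that for every $(k,l) \in \Z^2 \setminus \{(0,0)\}$ and every $\delta \geq 1$ the argument satisfies $\pi\delta(k^2+l^2) \geq \pi > 1$, so that $\mathcal{L}_1(\pi\delta(k^2+l^2)) = 1 - \pi\delta(k^2+l^2) < 0$. Hence the absolute value unfolds as $|\mathcal{L}_1(\pi\delta(k^2+l^2))| = \pi\delta(k^2+l^2) - 1$, while the origin (where $\mathcal{L}_1(0)=1$) contributes the constant $1$. Writing $m = k^2+l^2$, the quantity to analyze becomes
\begin{equation}
    S(\delta) = 1 + \sum_{(k,l) \in \Z^2\setminus\{(0,0)\}} \bigl(\pi\delta m - 1\bigr)\, e^{-\frac{\pi}{2}\delta m}.
\end{equation}

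Next I would differentiate $S$ with respect to $\delta$ term by term. This is legitimate because $h_1 \in M^1(\R)$, so \eqref{eq:cond_A} holds and the Gaussian factor dominates the polynomial prefactor; the differentiated series therefore converges uniformly on compact subsets of $\{\delta \geq 1\}$, which justifies interchanging the derivative with the sum. A direct computation for each fixed lattice point yields
\begin{equation}
    \frac{d}{d\delta}\Bigl[(\pi\delta m - 1)\, e^{-\frac{\pi}{2}\delta m}\Bigr] = \frac{\pi m}{2}\, e^{-\frac{\pi}{2}\delta m}\,\bigl(3 - \pi\delta m\bigr).
\end{equation}

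The crux is then the sign of the factor $3 - \pi\delta m$. Since $m = k^2+l^2 \geq 1$ for every nonzero lattice point and $\delta \geq 1$, we have $\pi\delta m \geq \pi > 3$, so each summand's derivative is strictly negative. Summing over the lattice gives $S'(\delta) < 0$ on $\{\delta \geq 1\}$, which establishes that $S$ is decreasing.

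I expect the main (and essentially only) subtlety to be the \emph{threshold}: the whole argument rests on the inequality $\pi > 3$, which is exactly what forces $\pi\delta m > 3$ for all admissible $m$ and $\delta$. Had $\delta$ been allowed to drop below $3/\pi \approx 0.955$, the first layer ($m=1$) would contribute a term that is \emph{increasing} in $\delta$, and the claimed monotonicity could genuinely fail; it is the restriction $\delta \geq 1$ (together with the fact that every nonzero lattice point lies at squared distance at least $1$) that makes every individual term monotone. By contrast, the justification of term-by-term differentiation is routine once unconditional convergence from the $M^1$ setting is invoked.
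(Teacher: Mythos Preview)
Your proof is correct and follows essentially the same route as the paper: unfold the absolute value using $\mathcal{L}_1(x)=1-x$ and $\pi\delta(k^2+l^2)\ge\pi>1$, differentiate term by term, and observe that each summand contributes $\tfrac{\pi m}{2}(3-\pi\delta m)e^{-\pi\delta m/2}<0$ since $\pi\delta m\ge\pi>3$. You are slightly more careful than the paper in explicitly justifying the term-by-term differentiation, but the argument is otherwise the same.
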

\begin{proof}
    We introduce the function $j_1$, which we define, for $\delta \geq 1$, to be
    \begin{equation}
        j_1(\delta) = \sum_{(k,l) \in \Z^2} |\mathcal{L}_1(\pi \delta (k^2+l^2))| e^{-
        \pi\delta (k^2+l^2)/2} = \sum_{(k,l) \in \Z^2} |1-\pi \delta (k^2+l^2)| e^{-\pi \delta (k^2+l^2)/2}.
    \end{equation}
    Since $1-\pi\delta(k^2+l^2)<0$ for $k,l\in\Z$, $(k,l)\neq (0,0)$, we have 
    \begin{equation}
        j_1(\delta) = 1 + \sum_{(k,l) \in \Z^2 \setminus \{(0,0)\}} (\pi \delta (k^2+l^2) - 1) e^{-\pi \delta (k^2+l^2)/2},
    \end{equation}
    which is clearly smooth on $(1,\infty)$ and continuous at 1. Taking the derivative, we obtain
    \begin{align}
        j_1'(\delta) & = \sum_{(k,l) \in \Z^2 \setminus \{(0,0)\}} \left( \pi (k^2+l^2)e^{-\pi \delta (k^2+l^2)/2} -\frac{\pi (k^2+l^2)}{2} (\pi \delta (k^2+l^2)-1) e^{-\pi\delta (k^2+l^2)/2} \right)\\
        & = \frac{\pi}{2} \sum_{(k,l) \in \Z^2 \setminus \{(0,0)\}} (k^2+l^2) \left(3 - \pi \delta (k^2+l^2) \right) e^{-\pi \delta (k^2+l^2)/2} < 0.
    \end{align}
\end{proof}
Proposition \ref{pro:Janssen_h1-3} together with Lemma \ref{lem:Janssen_h1} in particular shows that the Janssen test cannot be used to enlarge the frame set of $h_1$ for Gabor systems of the form $\mathcal{G}(h_1, \delta^{-1/2} \Z^2)$ for $1 < \delta < 2$. We remark that it also seems to be true that the Janssen test is inconclusive for rectangular Gabor systems of the form $\mathcal{G}(h_1, \delta^{-1/2} (a \Z \times (1/a) \Z))$, $a \in \R_+$ and $1 < \delta < 2$.

\subsection{Outside the safety region}\label{sec:outside}

In this section, we show that the frame set of the Hermite function $h_n$ on the main diagonal $a=b$ may have a rather surprising structure. In particular, $\mathfrak{F}(h_n)$ may contain a large segment on the main diagonal outside of the safety region. Our results do not rule out the option that all pairs $(a,a)$, $a<1$, belong to $\mathfrak{F}(h_n)$, $n \geq 4$, except for the points ruled out by Lemvig \cite[Thm.~7, Thm.~8]{Lem16} and Lyubarskii and Nes \cite{LyuNes13}, but we do not dare to come up with a new conjecture.

\subsubsection{The 15-th Hermite function}
For illustrative reasons, we decided to pick the particular case $n = 15$. We prove the following result.
\begin{proposition}\label{prop_15}
    The Gabor system $\mathcal{G}(h_{15}, 1/\sqrt{\delta} \ \Z^2)$ is a frame for $L^2(\R)$ for $\delta \geq 11$.
\end{proposition}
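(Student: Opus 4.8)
The plan is to apply the Janssen test (Lemma~\ref{lem:Janssen_test}). The adjoint of $\L=(1/\sqrt{\delta})\,\Z^2$ is $\L^\circ=\sqrt{\delta}\,\Z^2$, so by \eqref{eq:Vh_L} the test amounts to proving
\begin{equation}
    J(\delta):=\sum_{(k,l)\in\Z^2}\bigl|\mathcal{L}_{15}\bigl(\pi\delta(k^2+l^2)\bigr)\bigr|\,e^{-\frac{\pi}{2}\delta(k^2+l^2)}<2
\end{equation}
for every $\delta\ge 11$. The origin contributes $\mathcal{L}_{15}(0)=1$, so it suffices to bound the remaining sum by $1$. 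Since $\delta>16$ is the safety region for $h_{15}$, where the Gr\"ochenig--Lyubarskii theorem already yields a frame, I would only run the test on the compact range $\delta\in[11,16]$, on which the first-layer argument $\pi\delta$ stays in the oscillatory range of $\mathcal{L}_{15}$.

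First I would dispose of the tail. Using the crude polynomial bound \eqref{eq:bound_xn} together with $r_2(m)\le 4m$ from \eqref{eq:r2}, every layer $k^2+l^2=m$ is controlled by a constant multiple of $m^{15}e^{-\frac{\pi}{2}\delta m}$-type terms, and such terms are decreasing in $\delta$ whenever $\delta m>30/\pi\approx 9.55$, hence majorised by their value at $\delta=11$; Lemma~\ref{lem:Laguerre_Gamma_est} (or the truncated geometric series of Sec.~\ref{sec:aux}) then makes the tail smaller than any prescribed $\varepsilon$. A subtlety compared with the proof of Proposition~\ref{pro:4-36} is that at $\delta=11$ the Gaussian decay is much weaker than at the safety threshold $\delta=16$, so the large constant $\binom{15}{7}$ hidden in \eqref{eq:bound_xn} is only absorbed once $m$ is moderately large; consequently the cutoff has to be pushed out and the first few admissible layers (those $m$ that are sums of two squares below the cutoff, i.e. $m\in\{1,2,4,5\}$) cannot be handled by \eqref{eq:bound_xn} and must be estimated directly.

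For the low layers with $m\ge 2$ I would discretise $[11,16]$ into small subintervals $[\alpha,\beta]$ and, on each, bound the layer-$m$ contribution by $r_2(m)\bigl(\max_{x\in[\pi\alpha m,\pi\beta m]}|\mathcal{L}_{15}(x)|\bigr)\,e^{-\frac{\pi}{2}\alpha m}$, using that the Gaussian factor decreases in $\delta$ and that the maximum of the explicit degree-$15$ polynomial $\mathcal{L}_{15}$ over an interval is computable from its critical points. Here $\pi\delta m\ge 2\pi\cdot 11>62.1>x_{15}$ lies beyond the largest root, where $|\mathcal{L}_{15}|$ is monotone, so the maximum is attained at the right endpoint and the estimate is painless.

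The hard part is the first layer $4\,|\mathcal{L}_{15}(\pi\delta)|\,e^{-\frac{\pi}{2}\delta}$ on $\delta\in[11,16]$: there the argument $\pi\delta\in[34.6,50.3]$ lies in the oscillatory region $[q^2,s^2]$ of $\mathcal{L}_{15}$, with $q=4-\sqrt{15}$ and $s=4+\sqrt{15}$ (so $s^2\approx 61.98$), where $\mathcal{L}_{15}$ changes sign and no monotonicity is available. The Szeg\H{o} bound \eqref{eq:Szego} only gives $\le 1$ per point (i.e. $\le 4$ for the layer), and even the sharper Krasikov envelope \eqref{Kras1}, after its $e^{x/2}$ factor cancels the Gaussian, yields merely $4\sqrt{(s^2-q^2)/\bigl((\pi\delta-q^2)(s^2-\pi\delta)\bigr)}$, which can exceed $1$ on $[11,16]$ and is therefore too weak to close the argument on its own. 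I would thus control this layer by evaluating $\mathcal{L}_{15}$ on a sufficiently fine grid in $\delta$, again bounding it on each subinterval by the maximum of the polynomial over the argument range times the left-endpoint Gaussian. Summing the origin, the directly estimated low layers, and the tail, and checking that the total stays below $2$, completes the proof; the essential obstacle is exactly this rigorous uniform control of the oscillating first layer, where the available envelope bounds are not sharp enough and one is forced to use the true polynomial values.
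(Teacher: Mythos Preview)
Your proposal is correct and follows essentially the same route as the paper: apply the Janssen test on the compact range $\delta\in[11,16]$ (with $\delta>16$ covered by the safety region), kill the tail $k^2+l^2\ge 8$ via \eqref{eq:bound_xn} and Lemma~\ref{lem:Laguerre_Gamma_est}, and handle the layers $m\in\{1,2,4,5\}$ by direct uniform bounds on $|\mathcal{L}_{15}(m\pi\delta)|e^{-m\pi\delta/2}$ over $[11,16]$. The paper is simply more terse about the first layer, asserting the bound $|\mathcal{L}_{15}(\pi\delta)|e^{-\pi\delta/2}\le 0.15$ (supported by a plot and the remark that roots and critical points of the fixed polynomial $\mathcal{L}_{15}$ can be computed to any precision), whereas you spell out the subinterval discretisation explicitly; the underlying verification is the same.
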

\begin{proof}
    We follow our approach in the proof of Theorem~\ref{thm:main} and use the Janssen test with sharper estimates on the Laguerre polynomials for the particular case $n = 15$.
    By Lemma~\ref{lem:Janssen_test} and \eqref{eq:Vh_L}, it suffices to show that
    \begin{equation}\label{Lag15_main}
        \sum\limits_{(k,l) \in \Z^2}
        \left| \mathcal{L}_{15}\left(\pi \delta (k^2+l^2)\right) \right| e^{-\pi \delta(k^2+l^2)/2}  < 2 \quad \text{for  } 11 \le \delta \le 16.
    \end{equation}
    We split the summation and, for $\varepsilon \in (0,1)$, reduce the problem to the verification of
    \begin{align}
        4 |\mathcal{L}_{15}(\pi \delta)| e^{- \pi \delta/2} +
        4 |\mathcal{L}_{15}(2\pi \delta)| e^{- \pi \delta} + 
        4 |\mathcal{L}_{15}(4\pi \delta)| e^{- 2\pi \delta} +
        8 |\mathcal{L}_{15}(5\pi \delta)| e^{- 5\pi \delta/2} & < 1-\varepsilon, \\
        \sum\limits_{k^2+l^2 \ge 8 }
        \left| \mathcal{L}_{15}\left(\pi \delta (k^2+l^2)\right) \right| e^{-\pi \delta(k^2+l^2)/2} & < \varepsilon. 
    \end{align}
    First, we deal with the tail estimate. By~\eqref{eq:bound_xn}, we have
    \begin{equation}
        \sum\limits_{k^2+l^2 \ge 8 }
        \left| \mathcal{L}_{15}\left(\pi \delta (k^2+l^2)\right) \right| e^{-\pi \delta(k^2+l^2)/2} 
     \le 16 \binom{15}{ 7 } \pi^{15} \delta^{15} \sum\limits_{k^2+l^2 \geq 8} (k^2+l^2)^{15}  e^{- \pi \delta (k^2+l^2)/2}.
    \end{equation}
    Next, we apply Lemma \ref{lem:Laguerre_Gamma_est} with $\gamma = \delta$, $a = \sqrt{8}$, and $n=15$,
    and get
    \begin{equation}
        \sum\limits_{k^2+l^2 \geq 8} (k^2+l^2)^{15}  e^{- \pi\delta (k^2+l^2)/2} \le \frac{4\cdot 8^{16} e^{-4\pi\delta}}{1-e^{-\pi \delta/2 +2}}.
    \end{equation}
    Since for all $11\leq \delta \leq 16$ holds 
    $$
        16 \binom{15}{7} \pi^{15}\delta^{15} \leq 16 \binom{15}{7} \pi^{15} 16^{15}  \le  10^{31} \quad \text{and} \quad
        \frac{4\cdot 8^{16} e^{-4 \pi \delta}}{1-e^{-\pi \delta/2 +2} } \leq \frac{4\cdot8^{16} e^{-44 \pi }}{1-e^{-11\pi/2 +2} } < 10^{-44},
    $$
    we deduce that
    \begin{equation}\label{15_tail}
        \sum\limits_{k^2+l^2 \ge 8 }
        \left| \mathcal{L}_{15}\left(\pi \delta (k^2+l^2)\right) \right| e^{-\pi \delta (k^2+l^2)/2}  
        \leq 
        10^{-13}.
    \end{equation}
    For the remaining layers we use the following estimates for $\delta \in [11, 16]$:
    $$
        |\mathcal{L}_{15}(\pi \delta)e^{- \pi \delta/2}| \le 0.15, \quad |\mathcal{L}_{15}(2\pi \delta) e^{- \pi \delta}| \le 0.04,
    $$
    $$
        |\mathcal{L}_{15}(4\pi \delta) e^{- 2\pi \delta}| \le 10^{-10},  \quad |\mathcal{L}_{15}(5\pi \delta) e^{-5\pi \delta/2}| \le 10^{-16}.
    $$
    We support these bounds with Fig.~\ref{fig:h15_11-16}. However, they can be verified by hand, as $\mathcal{L}_{15}$ is a fixed polynomial and evaluated on a compact interval, and its roots (given, e.g., in \cite{SalZuc49}) and critical points can be calculated with any required precision.

    We have
    $$
    \sum\limits_{k^2+l^2 < 8 }
        \left| \mathcal{L}_{15}\left(\pi \delta (k^2+l^2)\right) \right| e^{-\pi \delta (k^2+l^2)/2}  \le 1 + 4 \cdot 0.15 + 4 \cdot 0.04 + 4 \cdot 10^{-10} + 8 \cdot 10^{-16} < 1.8.
    $$
    Combining these estimates with~\eqref{15_tail}, we arrive at the desired result.
\end{proof}
\begin{figure}[h!t]
    \subfigure[$m=1$]{
        \includegraphics[width=.45\textwidth]{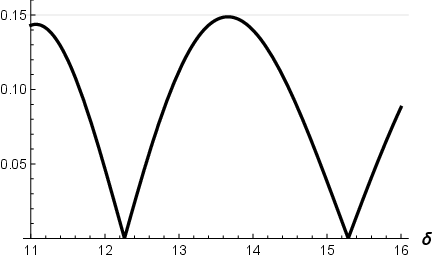}
    }
    \hfill
    \subfigure[$m=2$]{
        \includegraphics[width=.45\textwidth]{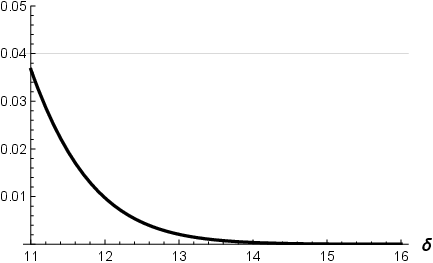}
    }
    \newline
    \subfigure[$m=4$]{
        \includegraphics[width=.475\textwidth]{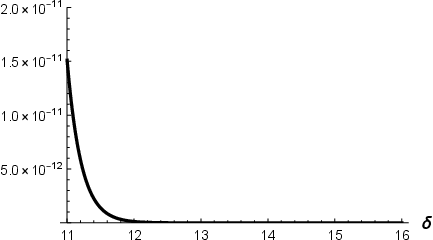}
    }
    \hfill
    \subfigure[$m=5$]{
       \includegraphics[width=.475\textwidth]{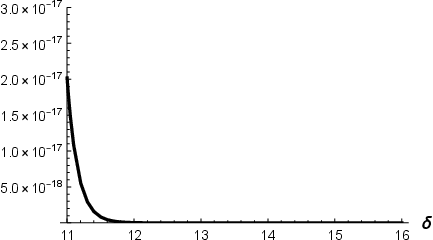}
    }
    \caption{The functions $|\mathcal{L}_{15}(m \pi \delta)|e^{-m \pi \delta/2}$ for $m=1,2,4,5$, and $\delta\in[11,16]$.} \label{fig:h15_11-16}
\end{figure}
\begin{remark}
Numerical investigations {\rm(}see Fig.~\ref{fig:h15_square}{\rm)} suggest that the Gabor system constitutes a frame in $L^2(\R)$ in a neighborhood of $\delta=3.06$, which is very far from the safety region. This can be checked with the methods used in Proposition~\ref{pro:4-36}. We have
\begin{equation}
    \sum_{|k|,|l| \leq 5} |\mathcal{L}_{15}(3.06\pi (k^2+l^2))| e^{-3.06  \pi  (k^2+l^2)/2} \leq 1.96933
\end{equation}
and the remainder term is again small due to Lemma \ref
{lem:Laguerre_Gamma_est} with $n=15$, $a=6$ and $\gamma = 3.06$:
\begin{align}
    &\phantom{=} \sum_{\max\{|k|,|l|\} \geq 6} |\mathcal{L}_{15}(3.06 \ \pi (k^2+l^2))| e^{- 3.06\pi(k^2+l^2)/2}\\
    & \leq 16 \binom{15}{7} (3.06 \pi)^{15}\sum\limits_{k^2+l^2\geq 36} (k^2+l^2)^{15} e^{- 3.06 \pi (k^2+l^2)/2}\\
    & < 6\cdot 10^{19} \cdot 3\cdot 10^{-50}< 10^{-29} .
\end{align}
Hence, the Gabor system $\mathcal{G}(h_{15}, 1/\sqrt{3.06} \ \Z^2)$ is a frame and there is an open neighborhood $U$ around $(1/\sqrt{3.06},1/\sqrt{3.06})$ such that $\mathcal{G}(h_{15}, a_U\Z \times b_U \Z)$ is a frame for $(a_U,b_U) \in U$.
\begin{figure}[h!t]
    \includegraphics[width=.475\textwidth]{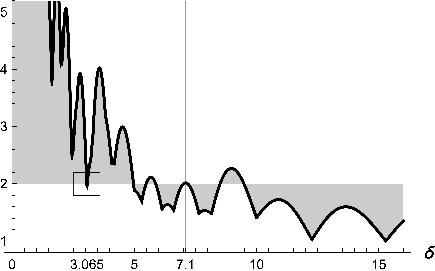}
    \hfill
    \includegraphics[width=.475\textwidth]{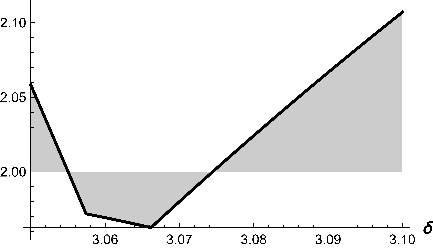}
    \caption{The Janssen test quantity for the Gabor system $\mathcal{G}(h_{15}, \delta^{-1/2} \Z^2)$. For the square lattice of density $\delta > 16$, we are in the safety region. However, the Janssen test quantity numerically provides a sufficient criterion way earlier, e.g., from $\delta \geq 10$. It is inconclusive around $\delta = 7.1$, but suffices around $\delta = 3.06$ (right we zoomed into the box from the left). The gray area is for visual aid where the value of the test is above or below 2.}
    \label{fig:h15_square}
\end{figure}
\end{remark}

\begin{remark}
    As can be seen from Fig.~\ref{fig:h15_square}, the safety region for $h_{15}$ and $\delta^{-1/2} \Z^2$ can be extended to, say, $\delta \geq 9.6$. As the frame set of a function in $M^1(\R)$ is open \cite{Gro14}, this also means that there is a neighborhood around the pairs $(1/\sqrt{\delta}, 1/\sqrt{\delta})$ for $\delta \geq 9.6$ which belongs to $\mathfrak{F}(h_{15})$. The above estimates, however, need refinements for $\delta \in [9.6, 11]$ as $\mathcal{L}_{15}(\delta \pi)$ has a zero close to $\delta=10$: $\mathcal{L}_{15}(x)$ has a root at $x = 31. 407519169754 \ldots$ \cite{SalZuc49} which gives $\delta \approx 9.99732$. The mass of the first layer is small, while the second layer contributes more mass. By dividing the interval $[9.6,11]$ into a finite number of sub-intervals and performing estimates similar to the ones above, we get the desired extension. Moreover, with a finite number of computations, it seems possible to enlarge the safety region for $h_{15}$ for rectangular lattices $a \Z \times b \Z$ to, say, $ab < 1/11$, for $a,b \in [1,1.3]$, as shown in Fig.~\ref{fig:safety_h15}. Generally, we may use the test for rectangular (or general) lattices. However, the test will often be inconclusive and will not yield new knowledge on the frame set. In particular, consider the Gabor system $(h_0, \delta^{-1/2} (a \Z \times (1/a) \Z))$. The Janssen test quantity is
    \begin{equation}
        \sum_{(k,l)\in \Z^2} e^{-\frac{\pi \delta}{2} \left(\frac{k^2}{a^2}+a^2 l^2\right)} = \theta_3\left( \frac{\delta}{2 a^2}\right) \theta_3\left( \frac{\delta a^2}{2}\right).
    \end{equation}
    Here, we use the notation
    \begin{equation}
        \theta_3 (x) = \sum_{k \in \Z} e^{-\pi k^2 x} = 1 + 2 \sum_{k \geq 1} e^{-\pi k^2 x}, \quad x > 0.
    \end{equation}
    It is not hard to see that $\lim_{x \to \infty} \theta_3(x) = 1$. We also have the functional equation
    \begin{equation}
        \sqrt{x} \ \theta_3(x) = \theta_3\left(\frac{1}{x}\right),
    \end{equation}
    which follows from the Poisson summation formula and is a special case of the Jacobi identity for the theta function. Thus, it follows that, for any fixed density $\delta > 0$ (no restrictions), the Janssen test quantity behaves like $\mathcal{O}(a)$ for rectangular lattices and $h_0$, showing that this method is clearly limited. Similar obstructions will hold for $h_n$.
\end{remark}
\begin{figure}[h!t]
    \includegraphics[width=.65\textwidth]{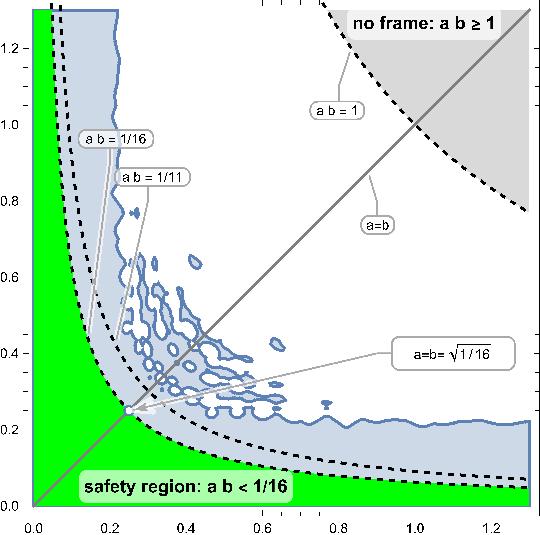}
    \caption{The safety region for $h_{15}$ and a numerical extension provided by the Janssen test quantity. For say $a,b \in [0,1.3]$, which are the depicted parameters, parts of this extension should be verifiable by hand, e.g., for $a b \leq 1/11$.}
    \label{fig:safety_h15}
\end{figure}

\subsubsection{The 9-th Hermite function}
In \cite{HorLemVid23}, the existence of pairs $(a,b)$ with $ab = 1/3$ which do not belong to the frame set of $h_n$, $n \geq 3$ was shown. We consider the example of $h_9$ and the square lattice of density 3, i.e., $1/\sqrt{3} \ \Z^2$ and show that $\mathcal{G}(h_9,1/\sqrt{3} \Z^2)$ is a frame.
\begin{proposition}\label{pro:h9}
    The Gabor system $\mathcal{G}(h_9,1/\sqrt{3} \ \Z^2)$ is a frame.
\end{proposition}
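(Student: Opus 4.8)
The plan is to mirror the strategy of Proposition~\ref{pro:4-36} and of the remark following Proposition~\ref{prop_15}. By the Janssen test (Lemma~\ref{lem:Janssen_test}) together with \eqref{eq:Vh_L}, and observing that the adjoint of $\L = \tfrac{1}{\sqrt 3}\Z \times \tfrac{1}{\sqrt 3}\Z$ is $\L^\circ = \sqrt 3\,\Z \times \sqrt 3\,\Z$ (so that $|\l^\circ|^2 = 3(k^2+l^2)$), it suffices to establish
\begin{equation}
    \sum_{k,l \in \Z} \left| \mathcal{L}_9\!\left(3\pi(k^2+l^2)\right)\right| e^{-\frac{3\pi}{2}(k^2+l^2)} < 2.
\end{equation}
First I would split $\Z^2$ into the finite block $\{|k|,|l| \le 5\}$ and the complementary region $\{\max\{|k|,|l|\} \ge 6\}$, which is contained in $\{k^2+l^2 \ge 36\}$.

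For the remainder, I would use the crude polynomial bound \eqref{eq:bound_xn}, which gives $|\mathcal{L}_9(x)| \le 10\binom{9}{4}x^9$ for $x \ge 1$, and then apply Lemma~\ref{lem:Laguerre_Gamma_est} with $n = 9$, $\gamma = 3$ and $a = 6$ (the hypothesis $\gamma > \tfrac{2n+2}{a^2\pi} = \tfrac{20}{36\pi}$ is comfortably satisfied). This yields
\begin{equation}
    \sum_{k^2+l^2 \ge 36} (k^2+l^2)^9 e^{-\frac{3\pi}{2}(k^2+l^2)} \le \frac{4}{3}\,6^{18}\,e^{-54\pi},
\end{equation}
and multiplying by the prefactor $10\binom{9}{4}(3\pi)^9$ leaves a quantity far below $10^{-40}$, so the remainder is utterly negligible and the whole burden falls on the finite block.

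The substance of the proof is therefore the evaluation of the finite block. Since $\mathcal{L}_9$ is a fixed degree-$9$ polynomial, I would evaluate $|\mathcal{L}_9(3\pi m)|\,e^{-3\pi m/2}$ at each value $m = k^2+l^2$ occurring with $|k|,|l| \le 5$, weight by the layer multiplicities $r_2(m)$, and sum, including the contribution $1$ from the origin. I expect the total to come out strictly below $2$ by a comfortable margin, exactly as the analogous finite computation does for $h_{15}$ at density $3.06$ in the remark after Proposition~\ref{prop_15}.

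\textbf{The main obstacle} is that, unlike in the proof of Theorem~\ref{thm:main}, density $3$ lies far outside the safety region $\delta > 10$ of $h_9$, so there is no structural reason to expect the finite sum to fall below $2$. The first-layer argument $3\pi \approx 9.42$ sits well inside the oscillatory range of $\mathcal{L}_9$ (whose largest root is below $38$), so several layers contribute genuinely, and the outcome hinges on a favorable interplay between the sizes of $\mathcal{L}_9$ on these layers and the Gaussian damping $e^{-\frac{3\pi}{2}m}$. This delicate numerical margin is the crux of the statement; because only finitely many values of a single fixed polynomial are involved, it can in principle be verified to any required precision by hand, but it is precisely this computation that must be carried out carefully to guarantee the strict inequality.
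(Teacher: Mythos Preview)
Your proposal is correct and follows essentially the same route as the paper: split at $|k|,|l|\le 5$, bound the tail via the crude polynomial estimate \eqref{eq:bound_xn} (the paper happens to write $\binom{10}{5}$ and obtains $<10^{-45}$, using the geometric-series method of Proposition~\ref{pro:4-36} rather than Lemma~\ref{lem:Laguerre_Gamma_est}, but either works), and then evaluate the finite block. The paper reports that finite sum as $\le 1.76496$, confirming the margin you anticipated.
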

\begin{proof}
    We start with a finite number of computations, which gives
    \begin{equation}
        \sum_{|k|,|l| \leq 5} \mathcal{L}_9(3 \pi (k^2+l^2)) e^{- 3 \pi (k^2+l^2)/2} \leq 1.76496.
    \end{equation}
    The bound on the remainder is derived analogously to Proposition~\ref{pro:4-36} using Lemma \ref{lem:Laguerre_Gamma_est} applied with $n=9$, $\gamma=3$, $a^2= 36$. We estimate
    \begin{align}
        \sum_{\max\{|k|,|l|\} \geq 6} \mathcal{L}_9(3 \pi (k^2+l^2)) e^{- 3 \pi (k^2+l^2)/2}
        & \leq \sum_{\max\{|k|,|l|\} \geq 6} 10 \binom{9}{4} (3 \pi (k^2+l^2))^9 e^{- 3 \pi (k^2+l^2)/2}\\
        & < 10^{12-57} = 10^{-45}.
    \end{align}
\end{proof}
For $h_0$ and $h_1$ we know that density 3 is sufficient as we are in the safety region \cite{GroLyu07}. For $h_2$ we know that $1/\sqrt{3} \ \Z^2$ does not provide a frame \cite{Lem16}. Curiously, the only cases $n \geq 3$ where the Janssen test seems to imply that $\mathcal{G}(h_n, 1/\sqrt{3} \ \Z^2)$ is a frame (see Fig.~\ref{fig:density3}), are $n=4$ (where the value does not exceed 1.59338) and $n=9$, which we just proved. Due to \cite[Thm.~7, Thm.~8]{Lem16}, only Hermite functions of order $4m$ and $4m+1$ can constitute a frame with a square lattice of density 3, in particular $\mathcal{G}(h_{15}, 1/\sqrt{3} \ \Z^2)$ is not a frame.
\enlargethispage{\baselineskip}
\begin{figure}[h!t]
    \centering
    \includegraphics[width=.65\textwidth]{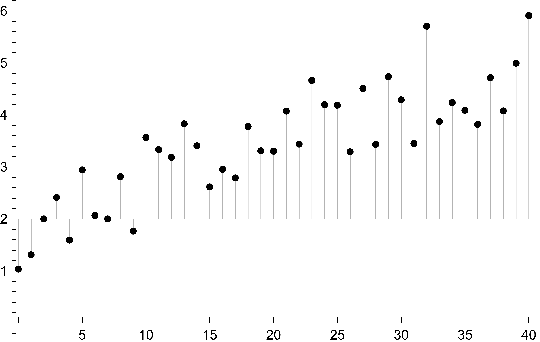}
    \caption{Values of the Janssen test quantity for $\mathcal{G}(h_n, 1/\sqrt{3} \ \Z^2)$, $n=0, \ldots , 40$. Only for $n \in \{0,1,4,9\}$ the Janssen test suffices to see that we have a frame.}
    \label{fig:density3}
\end{figure}


\end{document}